\newtheorem{Theorem}{Theorem}[section]
\newtheorem{Corollary}[Theorem]{Corollary}
\newtheorem{Lemma}[Theorem]{Lemma}
\theoremstyle{definition}
\begin{document}

\allowdisplaybreaks

\renewcommand{\thefootnote}{}

\renewcommand{\PaperNumber}{097}

\FirstPageHeading

\ShortArticleName{Fun Problems in Geometry and Beyond}

\ArticleName{Fun Problems in Geometry and Beyond\footnote{This paper is a~contribution to the Special Issue on Algebra, Topology, and Dynamics in Interaction in honor of Dmitry Fuchs. The full collection is available at \href{https://www.emis.de/journals/SIGMA/Fuchs.html}{https://www.emis.de/journals/SIGMA/Fuchs.html}}}

\Author{Boris KHESIN~$^\dag$ and Serge TABACHNIKOV~$^\ddag$}

\AuthorNameForHeading{B.~Khesin and S.~Tabachnikov}

\Address{$^\dag$~Department of Mathematics, University of Toronto, Toronto, ON M5S 2E4, Canada}
\EmailD{\href{mailto:khesin@math.toronto.edu}{khesin@math.toronto.edu}}
\URLaddressD{\url{http://www.math.toronto.edu/khesin/}}

\Address{$^\ddag$~Department of Mathematics, Pennsylvania State University, University Park, PA 16802, USA}
\EmailD{\href{mailto:tabachni@math.psu.edu}{tabachni@math.psu.edu}}
\URLaddressD{\url{http://www.personal.psu.edu/sot2/}}

\ArticleDates{Received November 17, 2019; Published online December 11, 2019}

\Abstract{We discuss fun problems, vaguely related to notions and theorems of a course in differential geometry. This paper can be regarded as a weekend ``treasure chest'' supplemen\-ting the course weekday lecture notes. The problems and solutions are not original, while their relation to the course might be so.}

\Keywords{clocks; spot it!; hunters; parking; frames; tangents; algebra; geometry}

\Classification{00A08; 00A09}

\begin{flushright}\begin{minipage}{66mm}
\it To Dmitry Borisovich Fuchs \\
on the occasion of his 80th anniversary
\end{minipage} \end{flushright}

\begin{flushright}\begin{minipage}{66mm}
\includegraphics{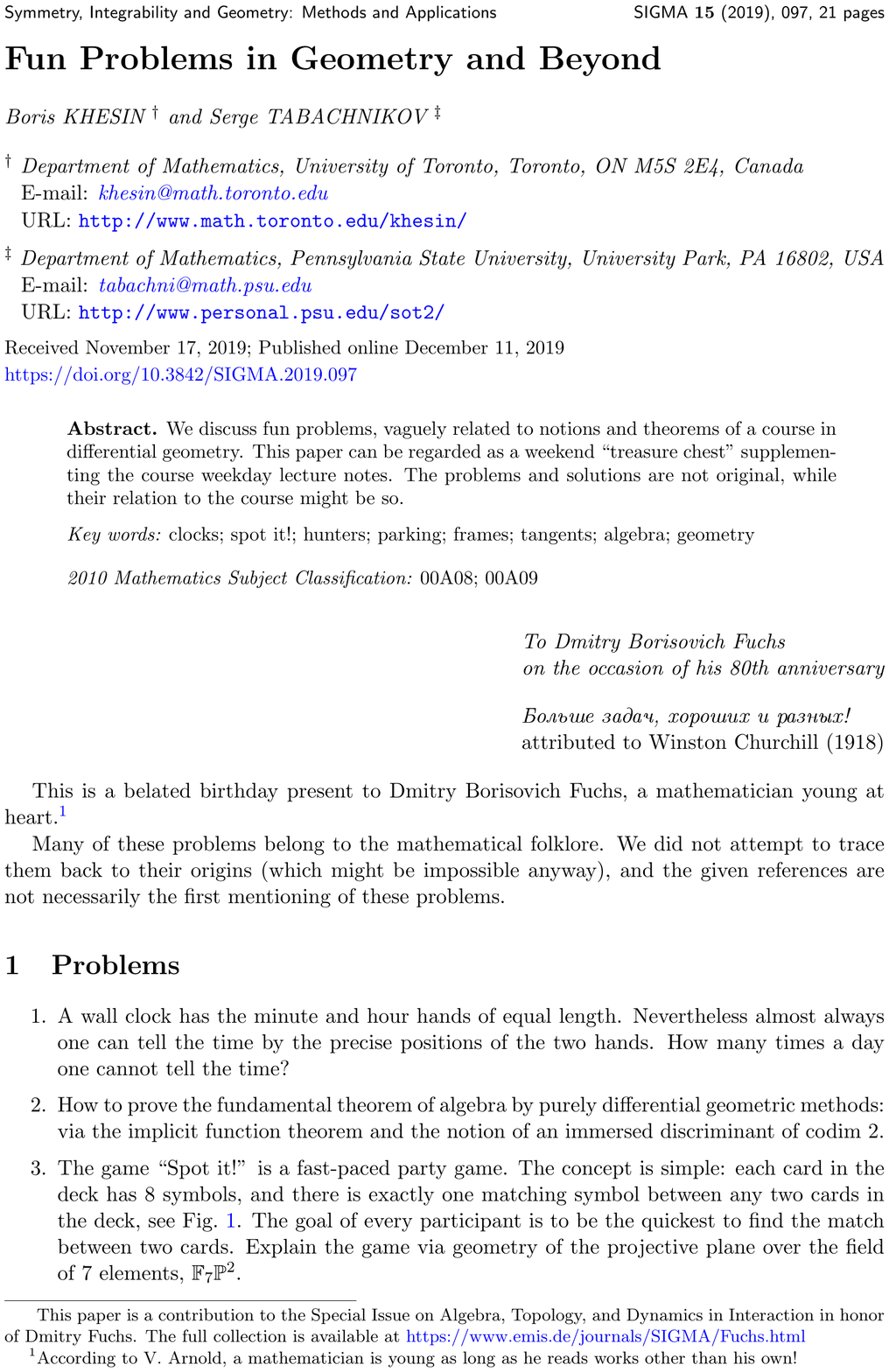} \\ 
 attributed to Winston Churchill (1918)
\end{minipage}
\end{flushright}

\renewcommand{\thefootnote}{\arabic{footnote}}
\setcounter{footnote}{0}

This is a belated birthday present to Dmitry Borisovich Fuchs, a mathematician young at heart.\footnote{According to V.~Arnold, a mathematician is young as long as he reads works other than his own!}

Many of these problems belong to the mathematical folklore. We did not attempt to trace them back to their origins (which might be impossible anyway), and the given references are not necessarily the first mentioning of these problems.

\section{Problems}

\begin{enumerate}\itemsep=0pt
\item A wall clock has the minute and hour hands of equal length. Nevertheless almost always one can tell the time by the precise positions of the two hands. How many times a day one cannot tell the time?

\item How to prove the fundamental theorem of algebra by purely differential geometric methods: via the implicit function theorem and the notion of an immersed discriminant of codim~2.

\item The game ``Spot it!'' is a fast-paced party game. The concept is simple: each card in the deck has 8 symbols,
and there is exactly one matching symbol between any two cards in the deck, see Fig.~\ref{spot-it}.
The goal of every participant is to be the quickest to find the match between two cards.
Explain the game via geometry of the projective plane over the field of 7 elements, $\mathbb F_7 \mathbb P^2.$

\begin{figure}[hbtp]\centering
\includegraphics[height=1.9in]{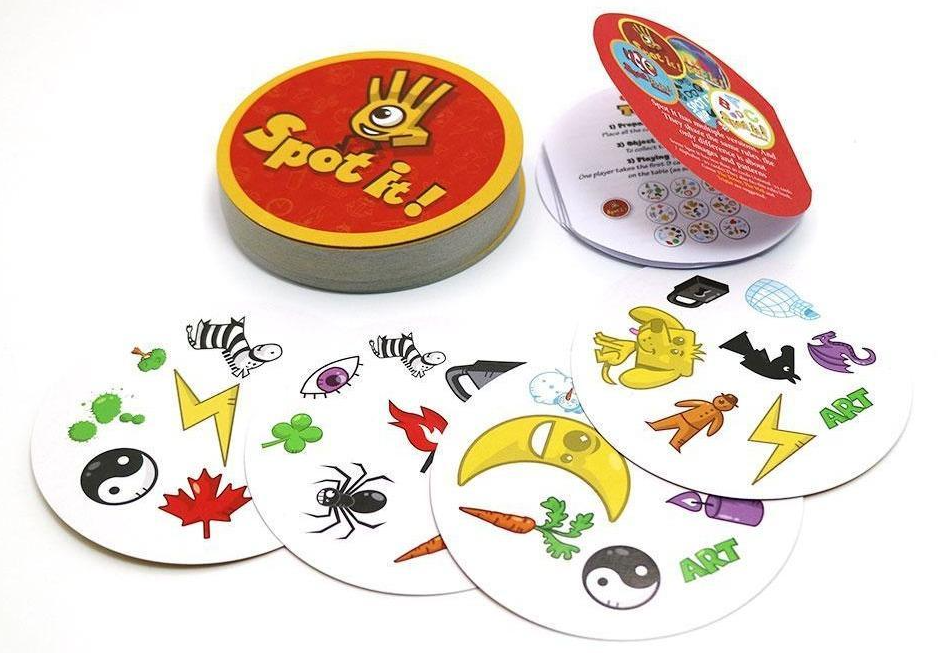}
\caption{Cards of the game ``Spot it!''}\label{spot-it}
\end{figure}

\item How to hang a picture frame on the wall on two (three, any number of) nails, so that if either of the nails is taken out, the picture frame falls?

\item Prove that the altitudes of a spherical triangle are concurrent as a consequence of the Jacobi identity in the Lie algebra of motions of the sphere.
\item Explain parallel parking with the help of the Frobenius theorem.

\item
The rules of the Moscow subway restrict the size $L\le 150$~cm of a luggage (rectangular parallelepiped) that can be taken along, where~$L$ is the sum of the parallelepiped's measurements: $L= \text{length}+\text{width}+\text{height}$ (see \url{https://www.mosmetro.ru/info/}, Section~2.10.1). May one cheat by putting a box of a larger size inside a smaller one?
\item Queuing up in the Moscow subway: how to separate 1/3 of a flow of people by simple dividers splitting any flux into two equal parts? How to be 1/3 Spanish?

\item A hunter started from his tent and went 10~km strictly south, then 10~km strictly west, then 10~km strictly north, and arrived at his original tent. Find the locus of possible locations of hunter's tent.

\item A family has two kids. It is known that one of them is a boy born on Friday. What is the probability that the other child is also a boy?
What if we knew that one of the kids is a~boy born between 6~pm and 7~pm on Friday~-- what would be the probability for the other kid to be a boy in that case?
\item Can one place a square table on an uneven surface? What about a rectangular one?
\item Given a cone (a mountain), throw a loop on it and pull it tight (to climb). Clearly, if the cone is sufficiently acute, it will work, and if it is considerably obtuse, the loop will slide away. What is the borderline angle?
\item Given two nested ovals, prove that there is a line tangent to the inner one so that the tangency point is halfway between its intersections with the outer oval (in fact, there are at least two such lines).
\item Wrap a closed non-stretchable string around a convex set, pull tight, and move the pulled end around to construct a curve. Prove that the string makes equal angles with this curve.
 \item Prove Ivory's lemma: in a quadrilateral, made of two confocal ellipses and two confocal hyperbolas, the two diagonals are equal.
\item Prove the Chasles--Reye theorem: pick two points on an ellipse and draw the tangent lines from them to a confocal ellipse. Then the other two vertices of the resulting quadrilateral lie on a confocal hyperbola, and the quadrilateral is circumscribed about a circle, see Fig.~\ref{Chasles}.

\begin{figure}[hbtp]\centering
\includegraphics{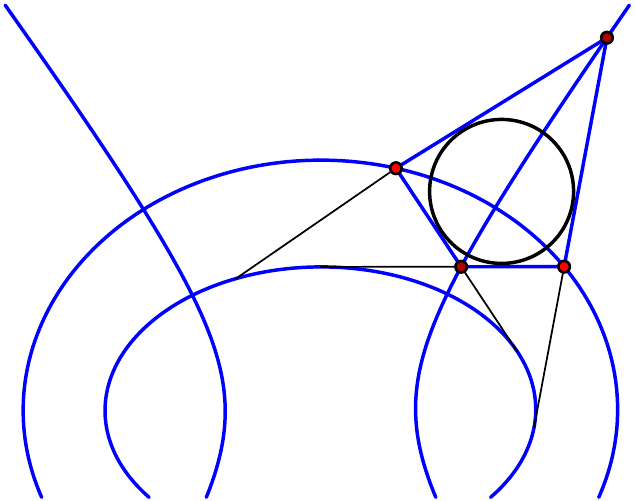}
\caption{The Chasles--Reye theorem.}\label{Chasles}
\end{figure}

\item An equitangent problem: is there a convex plane body $\mathcal B$ with the property that one can walk around it so that, at every moment the right tangent segment to $\mathcal B$ is longer than the left one?
\item The pentagram map sends a polygon to the polygon whose vertices are the intersection points of the combinatorially short (skip a vertex) diagonals of the initial polygon, see Fig.~\ref{hepta} and \cite{schwartz1992pentagram}. A polygon $P$ in the projective plane is called projectively self-dual if there exists a projective map from the projective plane to the dual plane that takes $P$ to the dual polygon $P^*$ (whose vertices are the sides of $P$ and whose sides are the vertices of $P$).
Show that pentagons in the projective plane are projectively self-dual, and the pentagram map takes a pentagon to a projectively equivalent one.

\begin{figure}[hbtp]\centering
\includegraphics{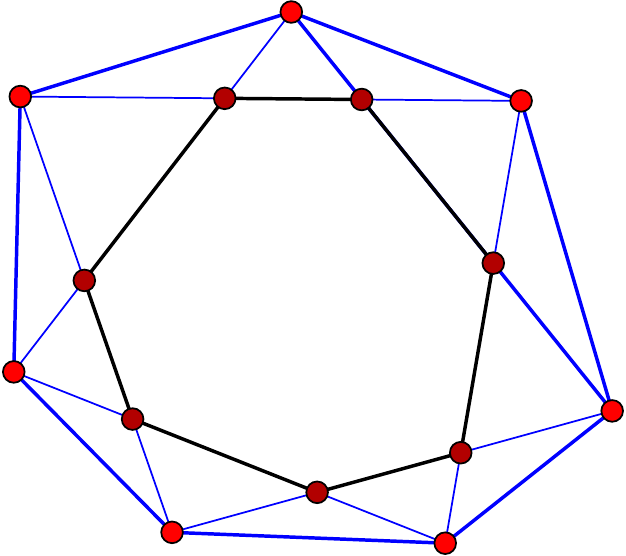}
\caption{The pentagram map.}\label{hepta}
\end{figure}

\item It is easy to make a M\"obius strip from a long and narrow paper rectangle, but it is impossible to make it from a square. What is the smallest ratio of length to width of a paper strip from which one can make a M\"obius strip? Paper is non-stretchable, that is, the M\"obius strip is a developable surface.
\end{enumerate}

\section{Solutions}

In the solutions below we occasionally leave some questions, in lieu of lengthy explanations, which the inquisitive reader can regard as small exercises on the way.

\subsection{Problem 1: Clocks with hands of equal length}

First note that during the first 12 hours the clock hands coincide exactly 11 times (why?).

The configuration space of clock hands is the product of two circles, i.e., a torus. If $x$ $({\rm mod}\, 2\pi)$ is the coordinate of the hour hand, and $y$ $({\rm mod}\, 2\pi)$ is the coordinate of the minute hand, the points $(x,y)$ on this torus, corresponding to possible positions of the hour and minute hands, are defined by the equation $y=12x$, since the minute hand is 12 times as fast as the hour one. The corresponding `graph' corresponds to the curve (cycle) on the torus, winding one time in the horizontal direction and 12 times in the vertical one.

Now, if the hour and minute hands are interchanged, the corresponding curve is given by the equation $x=12y$ and it winds 12 times in the horizontal direction and one time in the vertical one, see Fig.~\ref{clock-hands}.

\begin{figure}[hbtp]\centering
 \begin{tikzpicture}
\node at (0,0){\includegraphics{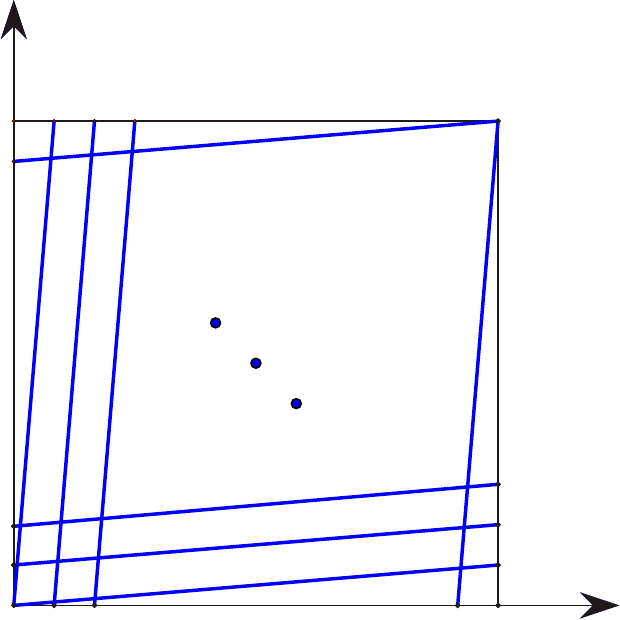}};
\node at (2.6,2.1){$(12,12)$};
\node at (-3.3,1.9){$12$};
\node at (-3.2,-2.2){$2$};
\node at (-3.2,-2.6){$1$};
\node at (-3.2,-3.3){$(0,0)$};
\node at (-2.6,-3.3){$1$};
\node at (-2.2,-3.3){$2$};
\node at (1.9,-3.3){$12$};
\end{tikzpicture}
\caption{Graphs of possible positions for different choices of minute and hour hands. Here $x,y\in [0,12]$ are measured in hours, rather than in radians $[0,2\pi]$.}\label{clock-hands}
\end{figure}

One cannot tell the time from clocks where these two curves intersect. There are exactly 143 intersections. Indeed, on the square $[0,2\pi]\times [0,2\pi]$ there are $12\times 12=144$ intersections of the graphs $y=12x$ for $x\in [0,2\pi]$, $y \bmod 2\pi$ and $x=12y$ for $y\in [0,2\pi]$, $x \bmod 2\pi$, but the points~$(0,0)$ and $(2\pi,2\pi)$ stand for the same position of hands on the clock.

However, this is not a complete answer yet. The points on the diagonal correspond to coinciding minute and hour hands, and at those moments one can tell the time. There are exactly~11 moments like this. Hence during the first 12 hours one cannot tell the time $143-11=132$ times. One has to double this answer for the whole day of 24 hours: one cannot tell the time $2\times 132=264$ times, i.e., roughly every~5.5 minutes.

\subsection{Problem 2: The fundamental theorem of algebra}

\begin{Theorem}
Every nonconstant polynomial over $\mathbb C$ has a root.
\end{Theorem}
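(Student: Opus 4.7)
The plan is to show that the map that sends an ordered $n$-tuple of complex numbers to the coefficients of the monic polynomial having those numbers as roots is surjective. Concretely, fix $n=\deg p \ge 1$ and consider the Vieta map
\[
V\colon \mathbb C^n \to \mathbb C^n,\qquad (z_1,\ldots,z_n)\longmapsto(a_0,\ldots,a_{n-1}),
\]
where $a_k$ are the coefficients of $\prod_{i=1}^n(z-z_i)$ (up to sign, the elementary symmetric functions of the $z_i$). A point $(a_0,\ldots,a_{n-1})$ lies in the image of $V$ precisely when the corresponding monic polynomial splits into linear factors, and in particular has a root. So the theorem reduces to showing that $V$ is surjective.

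First I would verify two easy facts. (i) The map $V$ is proper: the standard estimate on roots in terms of the size of the coefficients shows that the preimage of a bounded set is bounded, so the image of $V$ is closed in $\mathbb C^n$. (ii) At a point $(z_1,\ldots,z_n)$ with all $z_i$ distinct, the Jacobian determinant of $V$ is, up to sign, the Vandermonde $\prod_{i<j}(z_i-z_j)$, hence nonzero; by the implicit function theorem, $V$ is a local diffeomorphism there. Thus the critical locus of $V$ in the source is the union of the complex hyperplanes $\{z_i=z_j\}$, each of real codimension $2$, and its image is the discriminant hypersurface $\Delta\subset\mathbb C^n$ in the target.

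The heart of the argument, and the step where the codimension-$2$ discriminant enters decisively, is this: because $\Delta$ is a complex-algebraic hypersurface (real codimension $2$) in $\mathbb C^n$, the complement $\mathbb C^n\setminus\Delta$ is connected. Restricted to the open set $U\subset\mathbb C^n$ of tuples with pairwise distinct coordinates, $V\colon U\to\mathbb C^n\setminus\Delta$ is a local diffeomorphism (by step (ii)) and proper (by step (i), noting that a limit of distinct-coordinate tuples whose images stay away from $\Delta$ cannot develop a collision). A proper local diffeomorphism into a connected manifold is a covering map onto a clopen subset; since its image is nonempty (e.g.\ $V(1,2,\ldots,n)\notin\Delta$), it must be all of $\mathbb C^n\setminus\Delta$. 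Combined with the fact that the image of $V$ is closed and contains the dense set $\mathbb C^n\setminus\Delta$, we conclude that $V(\mathbb C^n)=\mathbb C^n$, i.e., every monic polynomial of degree $n$ factors completely over $\mathbb C$, and in particular has a root.

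The main obstacle is the connectedness of $\mathbb C^n\setminus\Delta$: this is exactly the codimension-$2$ input the problem asks us to exploit, and it is what allows the local information from the implicit function theorem to be globalized. Everything else---properness, the Vandermonde Jacobian, the covering-space conclusion---is routine once this geometric fact is in place.
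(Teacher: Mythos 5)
Your argument is correct, and it rests on the same two geometric pillars as the paper's proof -- the discriminant is a complex hypersurface, hence of real codimension~$2$ with connected complement, and the implicit function theorem gives local regularity away from it -- but you package these ingredients quite differently. The paper works entirely downstairs in coefficient space: it exhibits the incidence variety $\Sigma=\{P=P'=0\}$ as a smooth codimension-$2$ submanifold of $\mathbb C^{n+1}$, projects it to the discriminant $\Delta$, continues the roots along a path from an arbitrary $a\notin\Delta$ to $z^n-1$ (a root-counting/monodromy argument), and disposes of the case $a\in\Delta$ for free, since its $\Delta$ is \emph{defined} as the set of polynomials possessing a double root. You instead work upstairs with the Vieta map $V$ and prove surjectivity via the properness-plus-local-diffeomorphism-implies-covering argument; this buys you a stronger conclusion (every monic polynomial splits completely, not merely has a root) and makes the "roots cannot appear or disappear" step, which the paper leaves informal, completely rigorous via the root bound. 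The price is that you must handle polynomials \emph{on} the discriminant by a limiting/closed-image argument rather than by definition, and you should be careful about one small circularity: you cannot define $\Delta$ as "the image of the critical locus of $V$" (a priori you do not know that every polynomial with vanishing discriminant lies in the image of $V$ at all). Define $\Delta$ instead as the zero set of the resultant of $p$ and $p'$, a polynomial in the coefficients that is not identically zero (check it on $z^n-1$); the identity $\operatorname{disc}\bigl(\prod(z-z_i)\bigr)=\pm\prod_{i<j}(z_i-z_j)^2$ then gives exactly the two containments your argument needs, namely $V(\{z_i=z_j\})\subseteq\Delta$ and $V^{-1}(\mathbb C^n\setminus\Delta)\subseteq U$. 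With that adjustment the proof is complete.
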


The following proof is an adaptation of an argument from~\cite{AVG}.

\begin{proof}Consider the space of polynomials $P(z,a)=z^n+a_{n-1}z^{n-1}+\dots +a_0$ of $z\in {\mathbb C}$ of degree~$n$ with complex coefficients $ a_{n-1},\dots ,a_0$.

In the space ${\mathbb C}^{n+1}=\{(z,a_{n-1},\dots ,a_0)\}$ consider the set $\Sigma$ cut out by the 2 equations $P(z,a)=Q(z,a)=0$, where $Q(z,a) := P'(z,a)=\partial P/\partial z =nz^{n-1}+(n-1)a_{n-1}z^{n-2}+\dots +a_1$. This is the space of polynomials with coefficients $a$, that have double root at $z$.

\begin{Lemma} \label{lemma1}$\Sigma$ is a smooth submanifold in ${\mathbb C}^{n+1}$ of complex codimension $2$ $($i.e., of complex dimension $n-1)$.
\end{Lemma}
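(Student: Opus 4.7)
The plan is to realize $\Sigma$ as the zero locus of a single holomorphic map $F\colon \mathbb{C}^{n+1}\to\mathbb{C}^2$ and then apply the holomorphic implicit function theorem. Namely, set $F(z,a_{n-1},\dots ,a_0)=(P(z,a),Q(z,a))$, so that $\Sigma=F^{-1}(0)$. To conclude that $\Sigma$ is a smooth complex submanifold of complex codimension $2$, it suffices to verify that the complex Jacobian of $F$ has maximal rank $2$ at every point of $\Sigma$ (in fact, as will be clear, at every point of $\mathbb{C}^{n+1}$).

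First I would write down the Jacobian of $F$ in the coordinates $(z,a_0,a_1,\dots ,a_{n-1})$. Since $\partial P/\partial a_k=z^k$ for all $k$, $\partial Q/\partial a_k=kz^{k-1}$ for $k\geq 1$, and $\partial Q/\partial a_0=0$, the matrix takes the form
\[
dF=\begin{pmatrix} Q & 1 & z & z^2 & \cdots & z^{n-1}\\ \partial_z Q & 0 & 1 & 2z & \cdots & (n-1)z^{n-2}\end{pmatrix}.
\]
The decisive observation is that the $2\times 2$ minor on the columns indexed by $a_0$ and $a_1$ is
\[
\det\begin{pmatrix} 1 & z\\ 0 & 1\end{pmatrix}=1,
\]
independent of the point. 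Hence $dF$ has rank $2$ everywhere on $\mathbb{C}^{n+1}$, so $0\in\mathbb{C}^2$ is automatically a regular value of $F$.

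The holomorphic implicit function theorem then identifies $\Sigma=F^{-1}(0)$ with a smooth complex submanifold of $\mathbb{C}^{n+1}$ of complex codimension $2$, as claimed. There is essentially no obstacle: the argument reduces to a single Jacobian computation, and the only ``trick'' is noticing that the partial derivatives in the two lowest coefficients $a_0$ and $a_1$ already produce an upper-triangular $2\times 2$ block with unit determinant, so that one never needs to invoke the defining relations $P=Q=0$ to establish full rank. The only conceptual point worth mentioning is that because $F$ is holomorphic in all $n+1$ variables, one works throughout in the holomorphic category, and ``codimension $2$'' is to be read as complex codimension.
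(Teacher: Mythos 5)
Your proposal is correct and follows essentially the same route as the paper: the paper also checks that every value of $(P,Q)\colon {\mathbb C}^{n+1}\to{\mathbb C}^2$ is regular by noting that $\nabla P=(*,\dots ,*,1)$ and $\nabla Q=(*,\dots ,*,1,0)$ in the $a_1,a_0$ slots, which is exactly your unit-determinant $2\times 2$ block on the columns $a_0$, $a_1$. The only difference is cosmetic -- you spell out the full Jacobian matrix where the paper merely records the last two entries of each gradient.
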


\begin{proof}[Proof of Lemma~\ref{lemma1}] It suffices to check that for the map
\[
(P,Q)\colon \ {\mathbb C}^{n+1}\to {\mathbb C}^2,\qquad (z,a)\mapsto (P(z,a),Q(z,a))
\]
$(0,0)$ is a regular value. Actually, for this map all values in ${\mathbb C}^2$ are regular, since $\nabla P$ and $\nabla Q$ are everywhere noncollinear in ${\mathbb C}^{n+1}$. Indeed, $\nabla P=(*,\dots ,*,1)$, while $\nabla Q=(*,\dots ,*,1,0)$, where the last two derivatives are with respect to $a_1$ and $a_0$.
\end{proof}

Project $\pi\colon \Sigma^{n-1}\to{\mathbb C}^n_a$ by the ``forgetful map'' $(z,a)\mapsto a$. The image $\Delta:=\pi(\Sigma)$ is the space of all polynomials which have a double root. This image $\Delta$ is a surface in~${\mathbb C}^n_a$, which has complex dimension (not greater than)~$n-1$, that of~$\Sigma$. (Actually, this hypersurface $\Delta^{n-1}$ is singular, as the forgetful map of~$\Sigma^{n-1}$ to~${\mathbb C}^n_a$ is not an embedding. For $n=3$ it is a cylinder over a cusp, while for $n=4$ it is a cylinder over the singular surface called the swallowtail, see Fig.~\ref{swallowtail}.)

\begin{figure}[hbtp]\centering
\begin{tikzpicture}
\node at (0,0){\includegraphics{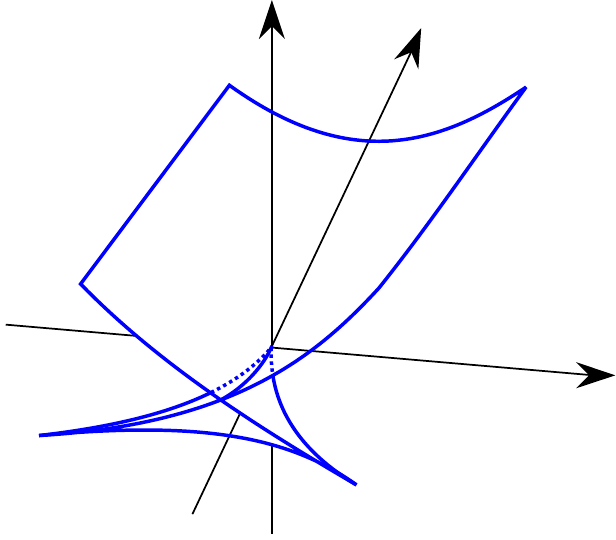}};
\node at (-0.1,2.5){$w$};
\node at (1.3,2.2){$u$};
\node at (2.9,-1.4){$v$};
\end{tikzpicture}
\caption{The swallowtail, discriminant $\Delta$, is the set of coefficients $(u,v,w)$ of polynomials $z^4+u z^2+v z+w$ with double roots.}\label{swallowtail}
\end{figure}

\begin{Corollary}\label{corollary2} The surface $\Delta^{n-1}\subset {\mathbb C}^n_a$ is of complex codimension~$1$, i.e., of real codimension~$2$ in ${\mathbb C}^n_a$, and therefore its complement in ${\mathbb C}^n_a$ is connected.
\end{Corollary}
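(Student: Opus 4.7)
The plan is to deduce the corollary directly from Lemma~\ref{lemma1} in two steps: a dimension count yielding the complex codimension, followed by a transversality / general position argument yielding the connectedness of the complement.

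For the dimension count, I would note that the projection $\pi\colon \Sigma \to {\mathbb C}^n_a$, $(z,a)\mapsto a$, is the restriction of a linear holomorphic map. By Lemma~\ref{lemma1}, $\Sigma$ has complex dimension $n-1$, so its image $\Delta=\pi(\Sigma)$ is a complex analytic set of complex dimension at most $n-1$, i.e., of complex codimension at least $1$ in ${\mathbb C}^n_a$. To confirm that this codimension is exactly $1$ (and not higher), I would observe that $\pi$ is generically one-to-one: for a generic $a\in \Delta$, the polynomial $P(z,a)$ has a single double root and $n-2$ simple roots, so the fiber $\pi^{-1}(a)$ consists of one point, and hence $\dim_{\mathbb C}\Delta=\dim_{\mathbb C}\Sigma=n-1$. (Equivalently, one may simply recall that $\Delta$ is cut out by the classical discriminant polynomial $\mathrm{Disc}(a)$, a single non-trivial equation in the $a_i$, so $\Delta$ is a hypersurface.) This establishes the first claim, that $\Delta$ has real codimension~$2$ in the ambient real manifold ${\mathbb C}^n_a\cong {\mathbb R}^{2n}$.

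For connectedness of the complement, I would use the standard fact that removing a real-codimension-$2$ subset from a connected real manifold leaves a path-connected complement. Concretely, given two points $a,b\in {\mathbb C}^n_a\setminus \Delta$, join them by the straight segment $\gamma_0(t)=(1-t)a+tb$. If $\gamma_0$ meets $\Delta$, then since $\Delta$ has real dimension $2n-2$, a generic small perturbation $\gamma$ of $\gamma_0$ (say, inside any affine $2$-plane through $a$ and $b$) avoids $\Delta$ entirely by transversality: a real curve has real dimension $1$, and $1+(2n-2)<2n$, so a generic perturbation misses $\Delta$. Hence $a$ and $b$ lie in the same connected component of ${\mathbb C}^n_a\setminus \Delta$.

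The main (mild) obstacle is the first step: showing $\Delta$ has complex dimension \emph{equal to} $n-1$ rather than strictly less. The cleanest way is the generic-fiber argument above; a slicker alternative is the remark that $\Delta$ is the zero locus of a non-constant polynomial (the discriminant), so by the Nullstellensatz it is a hypersurface. Once this is in hand, the transversality argument for connectedness is a routine application of general position in real codimension~$2$.
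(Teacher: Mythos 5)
Your proposal is correct and follows essentially the same route as the paper, which merely records that $\Delta=\pi(\Sigma)$ has complex dimension at most $n-1$ (hence real codimension at least $2$) and then asserts the connectedness of the complement without further argument. You supply two details the paper leaves implicit --- the generic-fiber (or discriminant-polynomial) argument that the codimension is exactly $1$, and the general-position perturbation of a path to avoid a real codimension-$2$ set --- both of which are sound; note only that for the intended application the upper bound $\dim_{\mathbb C}\Delta\le n-1$ already suffices, since connectedness of the complement needs real codimension at least $2$, not exactly $2$.
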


\begin{Lemma}\label{lemma3}For $a\not\in\Delta$, the roots of the polynomial $P(z,a)$ depend smoothly on $a$.
\end{Lemma}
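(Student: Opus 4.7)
The plan is to deduce smooth dependence of roots from the implicit function theorem, using the characterization of $\Delta$ as the locus of coefficients whose polynomial shares a root with its derivative.

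First I would unpack what $a\notin\Delta$ means: by definition of $\Sigma$ and $\Delta=\pi(\Sigma)$, a coefficient vector $a$ lies outside $\Delta$ iff there is no $z\in\mathbb C$ with $P(z,a)=Q(z,a)=0$ simultaneously. Equivalently, every root $z_0$ of $P(\cdot,a)$ satisfies $\partial P/\partial z\,(z_0,a)=Q(z_0,a)\neq 0$, i.e.\ all roots are simple.

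Next I would fix $a_0\notin\Delta$ and any root $z_0$ of $P(\cdot,a_0)$. Since $\partial P/\partial z\,(z_0,a_0)\neq 0$, the holomorphic implicit function theorem applied to the equation $P(z,a)=0$ in the variable $z$ with parameters $a$ produces a neighbourhood $U\subset\mathbb C^n_a$ of $a_0$ and a unique holomorphic (hence smooth) function $z\colon U\to\mathbb C$ with $z(a_0)=z_0$ and $P(z(a),a)\equiv 0$ on $U$. Repeating this at each of the $n$ distinct roots $z_0^{(1)},\dots,z_0^{(n)}$ of $P(\cdot,a_0)$ and shrinking $U$ to a common neighbourhood on which the $n$ branches remain distinct, one obtains $n$ smooth functions $z^{(1)}(a),\dots,z^{(n)}(a)$ on $U$, which must exhaust the roots of $P(\cdot,a)$ by degree count.

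This establishes local smoothness near every $a_0\notin\Delta$, which is precisely what the lemma asserts. The only subtlety worth flagging is that the individual branches $z^{(j)}(a)$ are defined only locally, and globally on the (connected, by Corollary~\ref{corollary2}) complement ${\mathbb C}^n_a\setminus\Delta$ one cannot in general separate the roots into single-valued functions — monodromy around $\Delta$ permutes them. For the statement at hand, however, only smooth dependence of the (unordered) root set is needed, and this follows directly from the local implicit-function construction above; no serious obstacle arises.
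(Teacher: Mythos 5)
Your proposal is correct and follows essentially the same route as the paper: both observe that $a\notin\Delta$ forces $\partial P/\partial z\neq 0$ at every root, and then invoke the implicit function theorem to get local smooth (indeed holomorphic) dependence of the roots on $a$. Your version simply spells out the details (simplicity of all roots, the degree count, the caveat about monodromy of branches) that the paper leaves implicit.
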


\begin{proof}[Proof of Lemma~\ref{lemma3}] This is the implicit function theorem: the equation $P(z,a)=0$ locally defines~$z$ as a function of $a$ provided
that $\partial P(z,a)/\partial z \not=0$. But the latter is exactly the condition that the corresponding $(z,a)\not\in\Sigma$, i.e., $a\not\in\Delta$.
\end{proof}

\begin{Corollary}\label{corollary4}Any two polynomials of degree $n$, lying outside of the complex hypersurface $\Delta^{n-1}\subset {\mathbb C}^n_a$, have the same number of roots.
\end{Corollary}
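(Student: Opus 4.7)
The plan is to show that the function $N : \mathbb{C}^n_a \setminus \Delta \to \mathbb{Z}_{\geq 0}$ counting the number of roots of $P(z,a)$ is locally constant, and then use connectedness of the complement (Corollary~\ref{corollary2}) to conclude it is globally constant. Pick two coefficient vectors $a^{(0)}, a^{(1)}$ outside $\Delta$. Since $\Delta$ has real codimension $2$, the complement $\mathbb{C}^n_a \setminus \Delta$ is connected; being an open subset of $\mathbb{C}^n$, it is in fact path-connected, so one may choose a continuous path $\gamma\colon [0,1]\to \mathbb{C}^n_a \setminus \Delta$ joining $a^{(0)}$ to $a^{(1)}$.

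The key step is the following local statement: for any $a^* \notin \Delta$, every root $z^*$ of $P(z,a^*)$ extends uniquely to a smooth function $z(a)$ defined on a neighbourhood of $a^*$, and no new roots are born near $z^*$. This is exactly the implicit function theorem applied to $P(z,a)=0$ at $(z^*,a^*)$, which is applicable precisely because $(z^*,a^*)\notin \Sigma$ by the definition of $\Delta=\pi(\Sigma)$, i.e., $\partial P/\partial z(z^*,a^*)\neq 0$. This is the content of Lemma~\ref{lemma3}. Consequently the number of roots inside a small disk $|z-z^*|<\varepsilon$ is locally constant in $a$.

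The one subtlety is that roots could, a priori, enter or leave any fixed bounded region in the $z$-plane as $a$ varies, so local counts in disks around existing roots might miss a root arriving from ``infinity''. Here I would use the standard Cauchy bound: for $P(z,a)=z^n+a_{n-1}z^{n-1}+\dots+a_0$ monic, every root satisfies $|z| \leq 1 + \max_i |a_i|$. Along the compact path $\gamma([0,1])$ the coefficients are uniformly bounded, so all roots of $P(z,\gamma(t))$ lie in a common closed disk $D_R = \{|z|\leq R\}$. Cover $D_R$ by a finite collection of small open disks, each either disjoint from all roots at a given parameter value or centred at a root (with radius smaller than the minimal inter-root distance). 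The implicit function theorem then shows that the total number of roots inside $D_R$ does not change as $a$ varies a little, hence $N(\gamma(t))$ is locally constant in $t$.

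The main obstacle, therefore, is not conceptual but this verification that roots stay in a bounded region and that root-tracking accounts for all of them; once this is handled, local constancy of $N$ on the path $\gamma$, combined with connectedness of $[0,1]$, gives $N(a^{(0)}) = N(a^{(1)})$, completing the proof of the corollary.
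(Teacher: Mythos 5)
Your proof takes essentially the same route as the paper: connect the two coefficient vectors by a path avoiding $\Delta$ (possible by Corollary~\ref{corollary2}) and track the roots along it via the implicit function theorem of Lemma~\ref{lemma3}. The paper's argument is only a two-line sketch (``the roots cannot collide, appear, or disappear''), and your additional care --- the Cauchy bound keeping all roots in a fixed disk so that none can arrive from infinity, plus the compactness argument ruling out new roots away from the tracked ones --- correctly supplies the details the paper leaves implicit.
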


Indeed, connect them by a smooth path staying away from $\Delta^{n-1}$ (which is possible due to Corollary~\ref{corollary2}). On the way, the roots change smoothly, i.e., they cannot collide, appear, or disappear.

Note that the polynomial $P_0:=z^n-1$ has $n$ simple roots. All polynomials outside of $\Delta^{n-1}$ can be connected to it, hence they also have the same number of roots (and therefore, at least one).

Finally, it remains to prove the theorem for $P\in \Delta$. But this is evident by definition of $\Delta$: this surface consists of polynomials which have (at least one) double root.
\end{proof}

\subsection{Problem 3: Spot it!}
This game is based on finding the intersections of any two lines in the projective plane over the field of 7 elements: $\mathbb F_7 \mathbb P^2$.

In more detail, recall the decomposition for a projective space:
\[
\mathbb{RP}^n={\mathbb R}^n\cup {\mathbb R}^{n-1}\cup\cdots \cup{\mathbb R}\cup \{0\}.
\]
One can define a projective space for any field $\mathbb F$. It has a similar decomposition:
\[
\mathbb{FP}^n=\mathbb{F}^n\cup \mathbb{F}^{n-1}\cup\cdots \cup\mathbb{F}\cup \{0\}.
\]
In particular, we may take $\mathbb F$ to be a finite field.
Projective lines $\mathbb{FP}^1$ in $\mathbb{FP}^n$ are defined similarly to the case $\mathbb{F}={\mathbb R}$.

Here are two exercises:
\begin{enumerate}\itemsep=0pt
\item[$i)$] If $\mathbb F$ is a finite field with $q$ elements, give a formula for the cardinality of $ \mathbb{FP}^n$.
\item[$ii)$] For a finite field $\mathbb F$ with $q$ elements give a formula for the number of projective lines $\mathbb{FP}^1$ in the projective plane~$\mathbb{FP}^2$. (Hint: there is a duality between points and lines in the projective plane.)
\end{enumerate}

Now we are back to the party game ``Spot it!''. It contains a deck of 55 cards, each of which has 8 different symbols (pictures) printed on its front.
Think of each card as a projective line~$\mathbb{FP}^1$ in the projective plane~$\mathbb{FP}^2$ over a field $\mathbb{F}:=\mathbb{F}_7$ with 7 elements,
while the symbols on each card stand for points belonging to that line.
The property that any two cards have a unique symbol in common means that every two lines in the projective plane have a unique point of intersection. To find this intersection point is the objective of players, who need
to spot the common symbol for a pair of cards as quickly as possible.
Now show that one could actually enlarge the collection to 57 cards!

\subsection{Problem 4: A frame on the wall}

Denote the clockwise kink of the rope about the first nail $a$ by $A$, and the
 clockwise kink about the second nail $b$ by $B$, while $A^{-1}$ and $B^{-1}$ stand for the counterclockwise kinks of the rope about the corresponding nails.
Let the path of the rope $\gamma$ be the commutator $[A,B]:=ABA^{-1}B^{-1}$ (it is the group commutator in the fundamental group of the plane without two points, $\mathbb R^2\setminus\{a,b\}$), cf. Fig.~\ref{rope}.

\begin{figure}[hbtp]\centering
\begin{tikzpicture}
\node at (0,0){\includegraphics{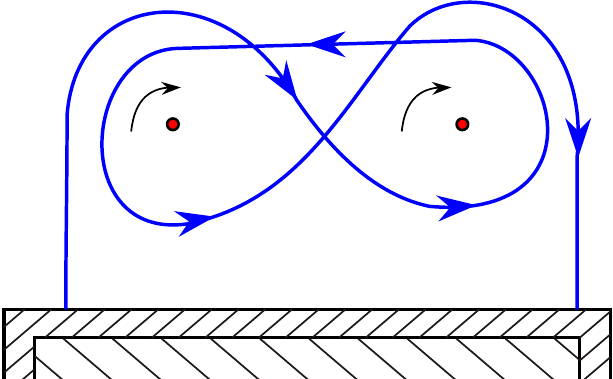}};
\node at (-1.5,0.5){$a$};
\node at (-1.15,1.0){$A$};
\node at (1.4,0.5){$b$};
\node at (1.6,1.0){$B$};
\end{tikzpicture}
\caption{A rope forms the commutator of kinks $\big[A,B^{-1}\big]=AB^{-1}A^{-1}B$ around the two nails $a$ and $b$ to hang a frame.}\label{rope}
\end{figure}

The absence of the first nail $a$ means that the corresponding kink is contractible, or identity, $A=I$. Then the commutator is the identity as well, $[A,B]=I$, and the frame falls (the path $\gamma=[A,B]$ of the rope in $\mathbb R^2\setminus \{b\}$, i.e., when the point $a$ is ``filled'', becomes contractible). Similarly, once there is no nail~$b$, i.e., $B=I$, again $[A,B]=I$. This solution is not unique: e.g., one can have the rope $\gamma$ to be the path $[A,B]^2$ or $[A,B^{-1}]$, which have the same property: $\gamma\not=I$ in general, but once $A=I$ or $B=I$, one has $\gamma=I$.

For three nails $a$, $b$, and $c$, we have three generators: $A$, $B$, and $C$. Then we set
\begin{gather*}\begin{split}&
\gamma:=[[A,B],C]= \big[ABA^{-1}B^{-1}, C\big]=ABA^{-1}B^{-1}C\big(ABA^{-1}B^{-1}\big)^{-1}C^{-1}\\
& \hphantom{\gamma}{}=ABA^{-1}B^{-1}CBAB^{-1}A^{-1}C^{-1}.\end{split}
\end{gather*}
Again, once either $A$, $B$ or $C$ is $I$, then $\gamma=I$. (If for two nails the path in Fig.~\ref{rope} might have been guessed, for three nails it is already highly unlikely without invoking the notion of the commutator.)

Similarly, one can accommodate any number of nails. As above, solutions are not unique. For instance, for~4 nails one can use $\gamma=[[[A,B],C],D]$ or $\gamma=[[A,B],[C,D]]$. They all have the property that once either of $A$, $B$, $C$, or $D$ is equal to~$I$ (i.e., one of the nails is removed), then $\gamma=I$ (and the frame falls). See \cite{DD} for a detailed discussion.

\subsection{Problem 5: Triangle altitudes and the Jacobi identity}

In the spherical geometry, one has the duality between (oriented) great circles (lines) and their poles, see Fig.~\ref{flag}.

\begin{figure}[hbtp]\centering
\begin{tikzpicture}
\node at (0,0){\includegraphics{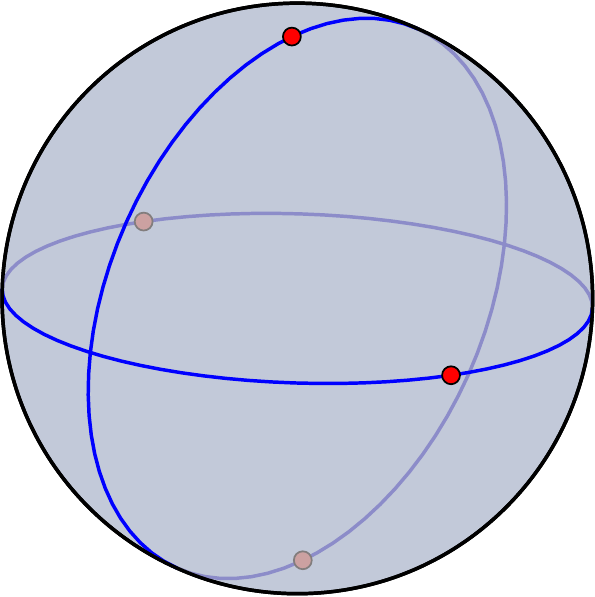}};
\node at (-1.55,0.5){$a$};
\node at (1.3,-0.5){$A$};
\node at (0.1,-0.65){$b$};
\node at (-0.05,2.35){$B$};
\end{tikzpicture}
\caption{Duality between great circles and their poles on the sphere.}\label{flag}
\end{figure}

This makes it possible to identify points and great circles, and this duality preserves the incidence relation.

Consider a spherical triangle $ABC$. Let $P$ be the pole of the line $AB$. In terms of the cross-product, one has $P \sim A\times B$, where $\sim$ means that the vectors are proportional. The line $PC$ is orthogonal to $AB$, and the pole of $PC$ is proportional to $P\times C$, see Fig.~\ref{sphereCin}. Thus the altitude dropped from vertex $C$ is given by $(A\times B) \times C$.

\begin{figure}[hbtp]\centering
\begin{tikzpicture}
\node at (0,0){\includegraphics{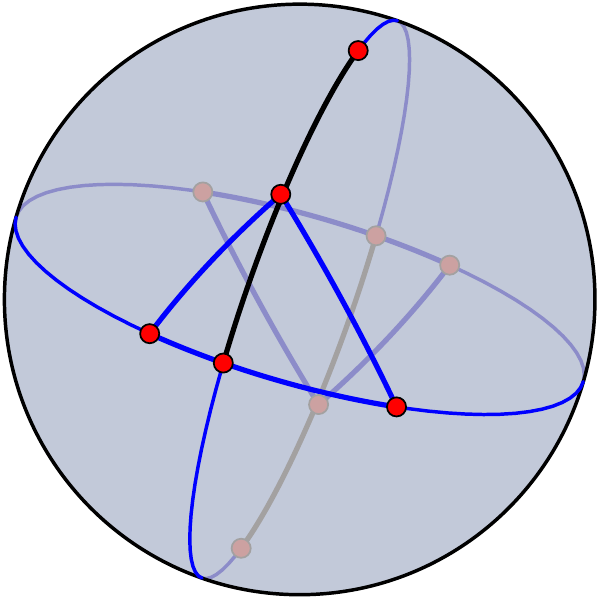}};
\node at (-1.55,0.0){$A$};
\node at (1.1,-0.8){$B$};
\node at (-0.25,1.4){$C$};
\node at (0.3,2.55){$P$};
\end{tikzpicture}
\caption{The altitude as a great circle.}\label{sphereCin}
\end{figure}

We need to show that the three altitudes are concurrent or, equivalently, that their poles are collinear. That is, we need to show that the three vectors
\[
(A\times B) \times C,\qquad (B\times C) \times A,\qquad (C\times A) \times B
\]
are collinear. And, indeed,{\samepage
\[
(A\times B) \times C + (B\times C) \times A + (C\times A) \times B =0,
\]
the Jacobi identity in the Lie algebra $\big({\mathbb R}^3,\times\big)$, that is, $\mathfrak{so}(3)$.}

A similar argument, due to V. Arnold, applies to triangles in the hyperbolic geometry; the relevant Lie algebra is $\mathfrak{sl}(2,{\mathbb R})$, the Lie algebra of motions of the hyperbolic plane, see~\cite{Ar}. A new point is that the altitudes of a hyperbolic triangle may intersect outside of the hyperbolic plane, in the de Sitter plane.

The intersection of the medians of a spherical triangle follows from the evident formula
\[
(A+B) \times C + (B+ C) \times A + (C+ A) \times B =0,
\]
(why?), which is a manifestation of skew-symmetry of the cross-product. Try to find and explain the corresponding formula for the intersection of the angle bisectors of a spherical triangle.

\subsection{Problem 6: Parking a car}

For a car in the plane the configuration space consists of all quadruples $(x, y, \theta, \varphi) \in
{\mathbb R}^2 \times S^1 \times (-\pi/4, \pi/4)$, where
$(x, y)$ is the position of the midpoint of the rear axle,
$\theta$ is the direction of the car axle,
$\varphi$ is the steering angle of the front wheels (see Fig.~\ref{car} and~\cite{Michor}).

\begin{figure}[hbtp]\centering
\begin{tikzpicture}
\node at (0,0){\includegraphics{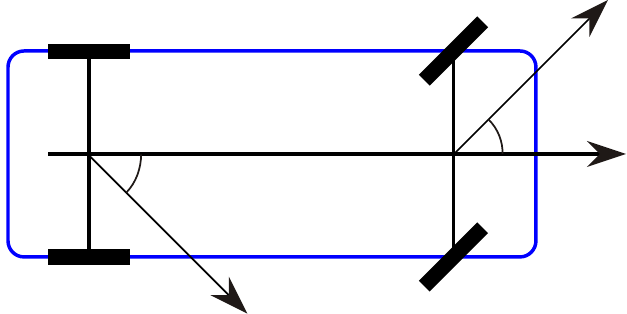}};
\node at (-1.7,0.5){$(x,y)$};
\node at (2.05,0.3){$\varphi$};
\node at (-1.6,-0.2){$\theta$};
\node at (0.1,-1.8){$\theta=0 \ \text{direction}$};
\end{tikzpicture}
\caption{The car position is described by the coordinates $(x,y)$ of its midpoint of the axle,
the angle~$\theta$ of the car axle with a fixed direction, and the steering angle~$\varphi$ of the front wheels, from~\cite{Michor}.}\label{car}
\end{figure}

There are two ``control'' vector fields:
\[
{\rm steer} := \frac{\partial}{\partial \varphi}\qquad{\rm and}\qquad
{\rm drive} := \cos\theta \frac{\partial}{\partial x} +\sin \theta \frac{\partial}{\partial y}+(1/\ell)\tan \varphi \frac{\partial}{\partial\theta},
\]
where $\ell$ is the distance between the front and rear axles. We leave it to the reader to explain these formulas.
Next, compute the vector field
\[
{\rm turn}:=[{\rm steer}, {\rm drive}] = \frac{1}{\ell \cos^2\varphi}\frac{\partial}{\partial\theta}.
\]
Its form explains why it is called this way. Finally, compute the vector field
\[
{\rm park}:=[{\rm drive}, {\rm turn}]= \frac{1}{\ell \cos^2\varphi}\left(\sin\theta \frac{\partial}{\partial x} -\cos \theta \frac{\partial}{\partial y}\right).
\]
Note that the field ${\rm park}$ moves the car perpendicular to the car axis, which is nothing else but parallel parking.

Here are two exercises for the reader:
\begin{enumerate}\itemsep=0pt
\item[$i)$]~Interpret the role of the vector field ${\rm park}$ for different values of $\varphi$;
\item[$ii)$]~Is it to the better or to the worse that the two control vector fields do not span
an integrable distribution? (By the Frobenius theorem, integrability would require the field $[{\rm steer}, {\rm drive}] $
to be a combination of the control fields ${\rm steer}$ and ${\rm drive}$.)
\end{enumerate}

\subsection{Problem 7: Cheating with luggage}

Consider $\varepsilon$-neighborhood of a rectangular box. Its volume is equal to
\[
V + \varepsilon S + 6\pi \varepsilon^2 L + \frac{4}{3}\pi \varepsilon^3,
\]
where $V$ is the volume of the box, $S$ is its surface area, and $L$ is the sum of lengths of its edges.

If the first box sits inside the second one, then the volume of its $\varepsilon$-neighborhood is smaller than that of the second one. Hence
\[
V_1 + \varepsilon S_1 + 6\pi \varepsilon^2 L_1 + \frac{4}{3}\pi \varepsilon^3 <
V_2 + \varepsilon S_2 + 6\pi \varepsilon^2 L_2 + \frac{4}{3}\pi \varepsilon^3.
\]
Cancel the common cubic term, divide by $\varepsilon^2$, and let $\varepsilon \to \infty$ (in contrast with the implicit assumption that $\varepsilon$ is small) to conclude that $L_1 < L_2$.

We refer to \cite{Gr} for more general Steiner and Weyl formulas for volumes of tubular neighborhoods.

\begin{remark*}\label{remark1}
We also note that the surface area of the box inside is also smaller than that of the box that contains it. This follows from the Cauchy--Crofton formula for the surface area.

Indeed, for a convex body $B$, its surface area equals
$(1/2) \mu$, where $\mu$ is the volume of the set of the oriented lines that intersect~$B$.
The space of oriented lines in ${\mathbb R}^n$ is identified with the cotangent bundle $T^* S^{n-1}$, and the volume form is induced by the canonical symplectic structure of the cotangent bundle, see, e.g.,~\cite{Sa}.
Every line that intersects the inner box also intersects the one that contains it, and this implies the result.
\end{remark*}

\subsection{Problem 8: How to be 1/3 Spanish?}

V.~Arnold discussed in \cite{Arnold} the following problem of practical origin.
A river has a flux of water equal to one (for instance, 1 cubic meter of water per second). Let us call a device that can
split the flow into two equal parts a simple divider. So by applying such a simple divider to the river, one can fork it into two smaller streams
with fluxes 1/2 each. As an example, one can think of a crowd of people approaching a turnstile which sends every other person to the right and the next one to the left, thus splitting the queue into two equal ones. Then, by applying two such dividers consecutively, one can separate 1/4 of the initial flow,
see Fig.~\ref{divider}.

Here is the main question: can one use a finite number of such simple dividers to separate~1/3 of the flow?

\begin{figure}[hbtp]\centering
\begin{tikzpicture}
\node at (0,0){\includegraphics{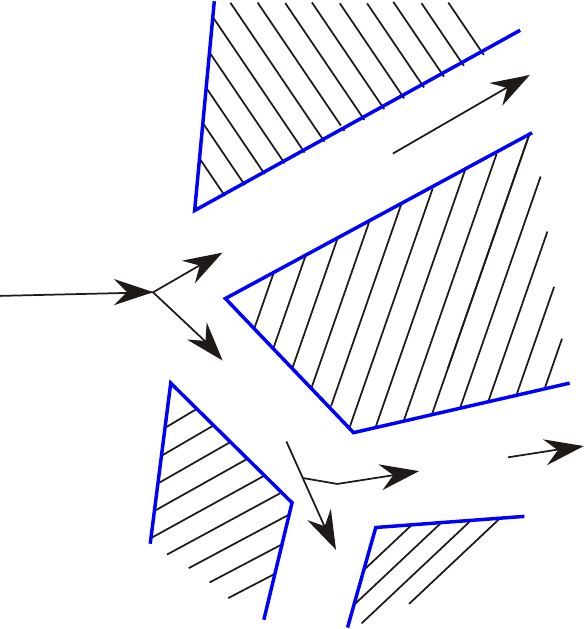}};
\node at (-3.1,0.19){$1$};
\node at (0.1,1.1){$\dfrac 12$};
\node at (-0.3,-0.8){$\dfrac 12$};
\node at (1.6,-1.5){$\dfrac 14$};
\node at (0.25,-2.8){$\dfrac 14$};
\end{tikzpicture}
\caption{Splitting flow in two, and then again in two, see~\cite{Arnold}.}\label{divider}
\end{figure}

Arnold writes that this problem arose in the civil defense drills in the Moscow subway:
they tried to direct the crowd of people into three nuclear bomb shelters of equal capacity!

This problem reminds a question we discussed in our high schools: ``Can one be 1/3 Spanish?'' Indeed, when only one of two parents is Spanish, it makes their child 1/2 Spanish. If only one of his or her four grand-parents is Spanish, the child is 1/4 Spanish. But this is exactly the simple divider problem!
So the question on the possibility of being 1/3 Spanish is the problem of separating~1/3 of the flow.
(Some people discuss an asymptotic solution, or ``convergence to~1/3'', but in those terms it would mean the use of an infinite number of dividers, and it is discarded here.) There is the tongue-in-cheek answer to this ``Spanishness'' question: ``Yes, one can be 1/3 Spanish, if both one's father is 1/3 Spanish and one's mother is 1/3 Spanish!''

Curiously, this question has yet another handy reformulation. Three thieves would like to split their
loot, item by item, via a lottery with equal probability 1/3 of winning for each of them with the help of a standard ``heads-or-tails'' coin. How should they organize their lottery?

Have you noticed the analogy with splitting the river into three equal flows?

The answer to the flow question is surprisingly simple. Let us split the river into the four equal streams, but then divert one of them back into the mainstream. The remaining three streams are completely equal and each one will carry 1/3 of the flux, see Fig.~\ref{diverter}.
(Actually, such a solution works for any $p/q$ part of the original flow: split the flow into $2^n\ge q$ streams, and then divert back whatever is not needed.)

\begin{figure}[hbtp]\centering
\begin{tikzpicture}
\node at (0,0){\includegraphics{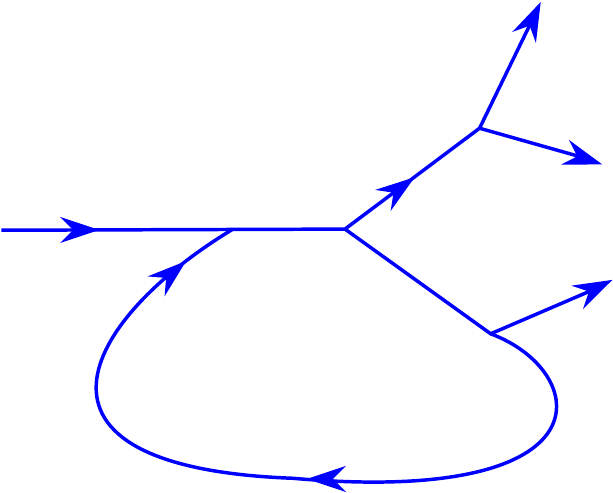}};
\node at (-2.3,0.45){$1$};
\node at (-0.2,0.65){$\dfrac 43$};
\node at (0.8,1.15){$\dfrac 23$};
\node at (1.25,0.05){$\dfrac 23$};
\node at (2.6,2.4){$\dfrac 13$};
\node at (3.2,0.9){$\dfrac 13$};
\node at (3.3,-0.3){$\dfrac 13$};
\node at (0.2,-1.8){$\dfrac 13$};
\end{tikzpicture}
\caption{Divert one of four streams back into the main one.}\label{diverter}
\end{figure}

\looseness=-1 For the thieves this would mean the following algorithm for each item: start by tossing the coin two times. For the outcome $(H, H)$ the thief $A$ wins this item, for $(H,T)$ the thief $B$ wins~it, $(T,H)$ the thief $C$ wins it, while if the outcome is $(T,T)$, start the process of tossing for this item anew.

One may argue that the infinity has reentered here: one does not have a uniform bound on the number of coin tosses.
 For the shelter setting this
would mean that for some people it will take arbitrarily long to get to the shelters.
And we do not even dare to say, what kind of inbreeding this means for being 1/3 Spanish \dots.

\subsection{Problem 9: Hunter's tent}

One evident solution is the North Pole $NP$. But it is not unique!

Consider the circle of latitude $\ell_1$ of length exactly 10 km near the South Pole. Then the hunter's tent can be {\it anywhere} on the circle of latitude $m_1$ that is 10 km north of $\ell_1$.
Furthermore, for the parallel $\ell_2$ of length equal to 5 km in the Southern hemisphere, one can consider the parallel $m_2$ that is 10 km north of it. In general, the tent can stand at any point of the infinite number of circles $m_k$, $k=1,2,3,\dots $ that are 10 km north of parallels $\ell_k$ of length $10/k$ km in the vicinity of the South Pole, see Fig.~\ref{tents}. Note that this set of all solutions
\[
NP\cup \{ m_k \,|\, k=1,2,3,\dots\}
\]
 is not closed!

\begin{figure}[hbtp]\centering
\begin{tikzpicture}
\node at (0,0){\includegraphics{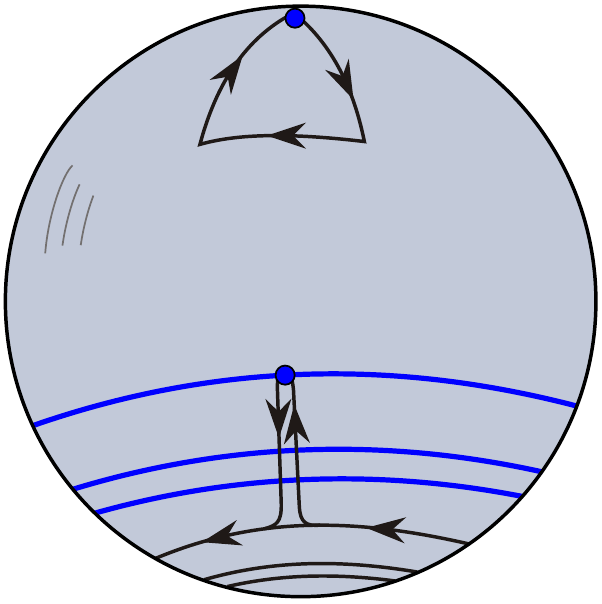}};
\node at (0,3.2){$NP$};
\node at (-3.0,-1.3){$m_1$};
\node at (-2.7,-1.9){$m_2$};
\node at (-2.4,-2.3){$m_3$};
\node at (1.9,-2.6){$\ell_1$};
\node at (1.45,-2.85){$\ell_2$};
\node at (1.1,-3.0){$\ell_3$};
\node at (0,-3.2){$SP$};
\end{tikzpicture}
\caption{All possible locations of the hunter's tent.}\label{tents}
\end{figure}

What would you answer if asked what animals the hunter was hunting for, cf.~\cite{Ar-kids-problems}?

\subsection{Problem 10: Two boys in a family?}

The seemingly evident answer 1/2 is correct, but for a different problem:
``It is known that the first of the kids is a~boy born on Friday. What is the probability that the second child is also a~boy?''
However, the actual formulation is different and has a surprising answer: the probability that the other child is also a boy is 13/27.

This problem, apparently, goes back to Martin Gardner with an extra twist added by Gary Foshee. It turns out that the answer strongly depends on the procedure of obtaining the information about families with kids, see an interesting discussion of possible cases in~\cite{Khovanova}. Here we accept the so called ``Boy-centered, Friday-centered'' procedure, and discuss the problem in the most naive way.

To start, let us forget about Friday, and first solve original Garder's problem:
``It is known that one of the kids is a boy. What is the probability that the other child is also a boy?''

Then a simple $2\times 2$ table for the first (rows $B$ and~$G$) and the second (columns $B$ and $G$) kids gives the following ordered pairs: $BB$, $BG$, $GB$, and $GG$, where~$B$ stands for a~boy and~$G$ for a girl, and equal probability for each of them.
The fact that one of the kids is a~boy rules out the~$GG$ case. We are left with three options,
$BB$, $BG$, $GB$, and we are interested in the probability of~$BB$ (i.e., two boys in the family) among these three.
Thus, the required probability in this simplified problem is~1/3.

Now return to the ``boy-on-Friday'' problem. Now we have not $2\times 2$, but $14\times 14$ table, see Fig.~\ref{boys-table}:
 for each, the first and the second child, we have 14 options. Namely, the first child could be a boy born on Monday ($B_M$), a boy born on Tuesday ($B_T$), etc, or it could be a girl born on Monday ($G_M$), on Tuesday ($G_T$), etc. Similarly, we have 14 options for the second child.

\begin{figure}[hbtp]\centering
\begin{tikzpicture}
\node at (0,0){\includegraphics[height=3in]{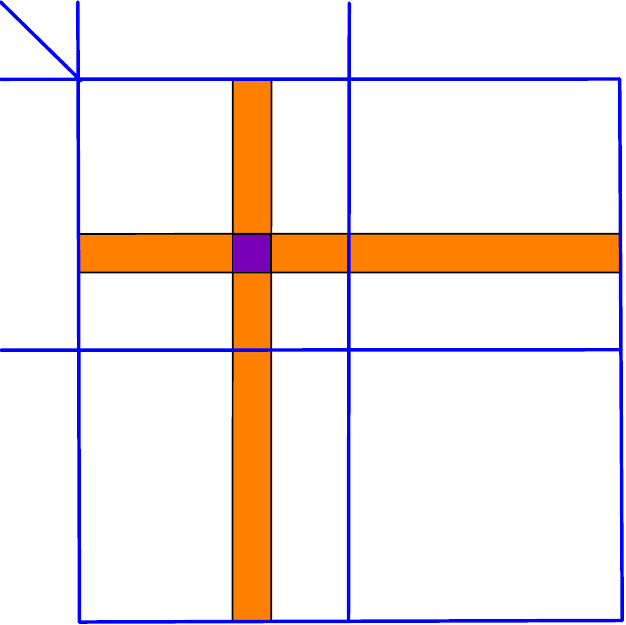}};
\node at (-3.5,3.2){${\rm I}$};
\node at (-3.1,3.5){${\rm II}$};
\node at (-2.6,3.2){${\rm M}$};
\node at (-2.2,3.2){${\rm T}$};
\node at (-1.8,3.2){${\rm W}$};
\node at (-1.3,3.2){${\rm Th}$};
\node at (-0.7,3.2){${\rm F}$};
\node at (-0.3,3.2){${\rm Sa}$};
\node at (0.15,3.2){${\rm Su}$};
\node at (-1.3,3.7){${\rm B}$};

\node at (0.7,3.2){${\rm M}$};
\node at (1.1,3.2){${\rm T}$};
\node at (1.5,3.2){${\rm W}$};
\node at (2.0,3.2){${\rm Th}$};
\node at (2.45,3.2){${\rm F}$};
\node at (2.9,3.2){${\rm Sa}$};
\node at (3.4,3.2){${\rm Su}$};
\node at (1.6,3.7){${\rm G}$};

\node at (-3.3,2.6){${\rm M}$};
\node at (-3.3,2.15){${\rm T}$};
\node at (-3.3,1.7){${\rm W}$};
\node at (-3.3,1.25){${\rm Th}$};
\node at (-3.3,0.75){${\rm F}$};
\node at (-3.3,0.30){${\rm Sa}$};
\node at (-3.3,-0.15){${\rm Su}$};
\node at (-3.9,1.25){${\rm B}$};

\node at (-3.3,-0.7){${\rm M}$};
\node at (-3.3,-1.15){${\rm T}$};
\node at (-3.3,-1.60){${\rm W}$};
\node at (-3.3,-2.05){${\rm Th}$};
\node at (-3.3,-2.55){${\rm F}$};
\node at (-3.3,-3.00){${\rm Sa}$};
\node at (-3.3,-3.45){${\rm Su}$};
\node at (-3.9,-2.05){${\rm G}$};

\end{tikzpicture}
\caption{$14\times 14$ options for the pair: (first child, second child).}\label{boys-table}
\end{figure}

The information given, that one of the kids is a boy born on Friday, singles out one row and one column:
$(B_F, *)$ and $(*,B_F)$, which intersect over one box $(B_F,B_F)$.
Altogether we have 27 boxes in these row and column. And we are interested only in the part lying in the $BB$ part, i.e.,
in $(B_F,B_*)$ or $(B_*,B_F)$, which contains of 13 small boxes, because of the one intersection $(B_F,B_F)$ discussed above.
Thus the required probability is~13/27. Note, that this probability is closer to 1/2 than the previously obtained~1/3.

If we know also the hour when one of the kids was born, we have to split each column and each row according to the~24 hours in the day. The corresponding probability will be 335/671, even closer to 1/2 than 13/27.
This happens because the more information we know about one of the kids, the smaller is the probability that
the other child was born on the same weekday, at the same hour, etc. Hence the smaller is the intersection of the row and the column, which is ``responsible'' for the difference from 1/2.

\subsection{Problem 11: Setting a table on an uneven floor}

By a table we mean four points, the tips of the legs, and we ignore a more complex structure of a real table (such as the length of the legs, the existence of the table top, etc.) There are three levels of sophistication in this problem. The easiest case is that of a square table.

Assume that the floor is a graph of a `reasonable' function, and the floor is transparent to the tips of the table's legs.
Given a square $ABCD$, place the diagonal points $A$ and $C$ on the floor, and rotate the square in space about the line $AC$ so that the signed vertical distances from points~$B$ and~$D$ to the floor be equal (the sign is positive if the points are above, and negative if they are below the floor). We assume that once the diagonal $AC$ is fixed on the floor, such position of the square $ABCD$ is unique (this is an assumption on the shape of the floor: it should not be too `wild'). Call this ``position 1''.

Parallel translate the square in the vertical direction so that points $B$ and $D$ are now on the floor. Then points $A$ and $C$ are at equal distance from the floor, the same distance as in position~1, but with the opposite sign. Call this position of the table ``position~2''.

Now continuously move the table from position 1 to position 2, keeping points $A$ and $C$ on the floor (for example, by rotating the table through about~$90^{\circ}$). In the process, the distance from points $B$ and $D$ to the floor changes continuously and it changes the sign. Hence there was a moment when the distance equaled zero, and all four legs were set on the floor.

This problem and its solution is part of mathematical folklore. In particular, it was presented by M. Gardner in his Scientific American ``Mathematical Games'' column in 1973. A slightly different proof is described by M.~Kreck in the Numberphile video \url{https://www.youtube.com/watch?v=OuF-WB7mD6k}.

The next level of sophistication is when the table is rectangular. Gardner says, without explanation, that a similar argument works, but the rotation should be not through $90^{\circ}$, but through $180^{\circ}$. This 1-dimensional, intermediate value theorem, argument, is presented in \cite{BLPR} and in the video \url{https://www.youtube.com/watch?v=aCj3qfQ68m0}. We must confess that we do not understand this argument. However, the result holds: one can set a rectangular table of any aspect ratio on an uneven floor.

The proof is also presented in \cite{BLPR}, but it makes use of a serious topological result by Live\-say~\cite{Li}. This theorem states that, given a continuous function $F$ on the 2-dimensional sphere, and an angle $\alpha$, there exist two diameters of the sphere making angle $\alpha$ such that $F$ takes the same value at the end points of these diameters.

To apply this theorem to a rectangular table, assume that the center of the rectangle (the table) is confined to a vertical axis. For a given position of this center, the four vertices lie on a sphere, and the vertical distance from the points of this sphere to the floor is a continuous function. Thus this function assumes the same value at the four vertices of the rectangle, and parallel translating the table vertically places its four legs on the floor.

The third level in this problem is a conjecture: a four-legged table can be set on an uneven floor if and only if the four legs form an planar inscribed quadrilateral. The condition is necessary, otherwise the table cannot be placed on a plane or a sphere.

This problem is loosely analogous to the Toeplitz, or square peg, conjecture that every Jordan curve has an inscribed square. This conjecture is proved in particular cases (polygons, smooth curves), but it remains open in the full generality; see~\cite{Ma} for a recent survey. However, a~stronger result holds for an arbitrary smooth convex curve $\gamma$: there exists a homothetic copy of an arbitrary cyclic quadrilateral inscribed in~$\gamma$, see~\cite{AA}.

Let us also mention a variant of the problem when the floor has the shape of a~hill with a~convex boundary, and one wants to place the square table so that the tabletop is horizontal, see~\cite{Fe,KK,Me}.

\subsection{Problem 12: A conic mountain}

The rope will tighten on the mountain along a geodesic. Geodesics on a cone are straight lines on its unfolding, which is a sector
of some angle. Make this unfolding by cutting the cone
along the ray passing through the knot of the loop, so that the straight line goes from one sector's side to the other.
The loop will slide away if the straight line joining the sector sides does not lie within the sector, i.e., if the sector's angle
is at least $ \pi$.

What cone does the sector of angle $\pi$ correspond to, i.e., what cone can be bend from half a~disk?
If the radius of the sector is~$R$, its arc length is $\pi R$ (half a circumference). Once we bend the cone,
this arc will become the circumference of radius~$R/2$. Thus the cone has the generator of length $R$, while its base is a circle of radius~$R/2$. These are respectively the hypotenuse and the leg of the right triangle spanning the cone. This spanning triangle has the angle $\pi/6$ at the cone vertex.

\subsection{Problem 13: Two nested ovals}

Let $AB$ be a chord of the outer oval, $\Gamma$, tangent to the inner one, $\gamma$, and let $A'B'$ be its perturbation, see Fig.~\ref{ovals}. Let $\varepsilon$ be the angle between the chords. Then the difference of the areas of the infinitesimal triangles $AOA'$ and $BOB'$ is
\[
\frac{1}{2}\varepsilon \big(|AO|^2-|BO|^2\big).
\]

\begin{figure}[hbtp]\centering
\begin{tikzpicture}
\node at (0,0){\includegraphics{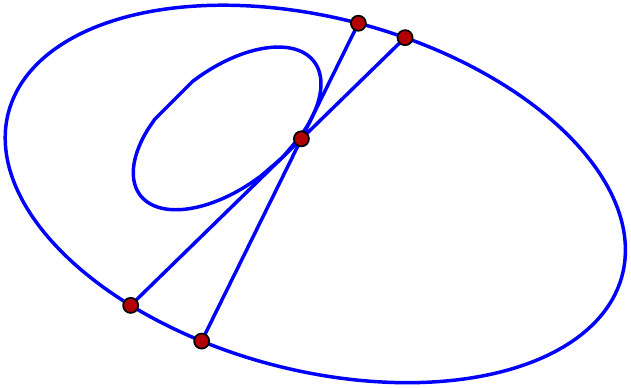}};
\node at (-3.1,1.3){$\Gamma$};
\node at (-1.4,1.2){$\gamma$};
\node at (0.15,0.5){$O$};
\node at (0.55,2.05){$B'$};
\node at (1.00,1.85){$B$};
\node at (-1.40,-1.75){$A'$};
\node at (-2.00,-1.40){$A$};
\end{tikzpicture}
\caption{A chord and its perturbation tangent to the inner oval.}\label{ovals}
\end{figure}

This implies that $|AO|=|BO|$ if and only if the area on one side of the chord $AB$ has an extremal value. Since this area attains a maximum and a minimum, there are at least two chords that are bisected by the tangency point.

A somewhat different approach is to consider the outer billiard map about the inner oval $\gamma$. Given a point $x$ in the exterior of $\gamma$, draw a tangent line to $\gamma$ and reflect $x$ in the tangency point to obtain a new point $y$. The map $x \mapsto y$ is the outer billiard map.

The outer billiard map is area preserving, see, e.g., \cite{TaB,DT}. Hence the image of the outer oval $\Gamma$ must intersect $\Gamma$ at least two points. This yields two desired chords.

\subsection{Problem 14: Wrap a string}

The proof is illustrated in Fig.~\ref{string}. Let $O$ be a point on the ``invisible'' from point $X$ side of the given curve. Consider two functions of point $X$: $f(X)$ is the distance to $O$ going on the right of the curve, and $g(X)$ is the distance to $O$ going on the left of the curve.

The gradients of these functions are unit vectors along the lines $AX$ and $BX$, respectively.
Indeed, the level curve of $f(X)$ is an involute of the curve, and its tangent vector is orthogonal to the line $AX$. Hence $\nabla f(X)$ is collinear with $AX$, and the fact that this vector is unit is tautological. Likewise for $\nabla g(X)$.

\begin{figure}[hbtp]\centering
\begin{tikzpicture}
\node at (0,0){\includegraphics{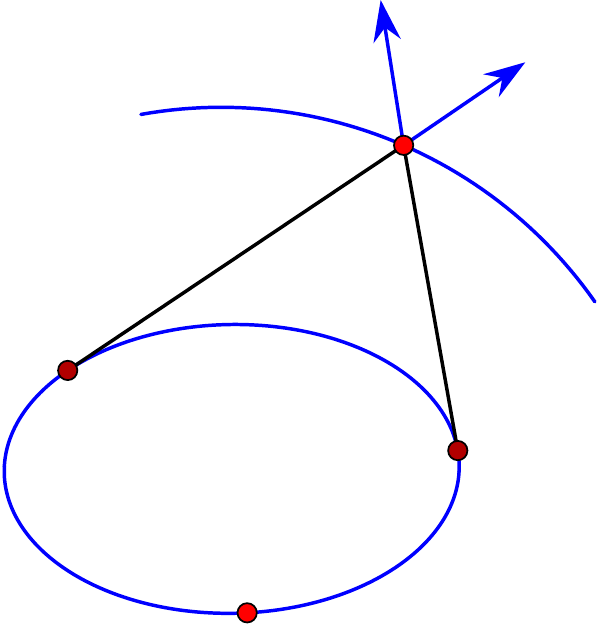}};
\node at (-0.5,-2.75){$O$};
\node at (-2.4,-0.3){$A$};
\node at (1.9,-1.4){$B$};
\node at (1.2,2.10){$X$};
\end{tikzpicture}
\caption{Wrapping a string around an oval.}\label{string}
\end{figure}

The result of the string construction is the curve given by the equation $f(X)+g(X)=$ const. The gradient of the function $f+g$ is perpendicular to this curve, and this gradient is the sum of two unit vectors along the lines $AX$ and $BX$. Hence these lines make equal angles with the curve.

\subsection{Problem 15: Ivory's lemma}

About 200 years ago, J.~Ivory studied the gravitational potential of a homeoid, the layer between an ellipsoid and a homothetic ellipsoid. He proved that the equipotential surface outside of the homeoid are the confocal ellipsoids. The main geometric ingredient in the proof was Ivory's lemma, illustrated in Fig.~\ref{Ivory} in its simplest case of the Euclidean plane.

\begin{figure}[hbtp]\centering
\includegraphics{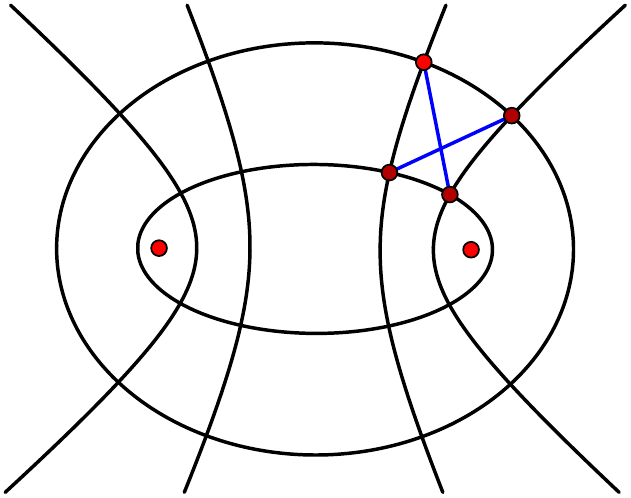}
\caption{Ivory's lemma: the diagonals of a curved quadrilateral are equal.}\label{Ivory}
\end{figure}

We shall deduce Ivory's lemma from integrability of billiards in ellipses. See \cite{AB,Ar-problems,GIT, IT} for these and related topics, also including the material of the next section.

A caustic of a billiard is a curve such that if a billiard trajectory is tangent to this curve, then the reflected trajectory is again tangent to it. The billiard in an ellipse admits a family of caustics, the confocal ellipses. Therefore the string construction on an ellipse yields a confocal ellipse (the Graves theorem).

The Arnold--Liouville theorem from the theory of integrable systems implies that each ellipse carries a coordinate in which the billiard reflection in a confocal ellipse is given by a shift; furthermore, the same coordinate describes the reflections in all confocal ellipses (the value of the shift of course depends on the choice of these ellipses). In terms of Fig.~\ref{coord}, the map $A\mapsto B$ is given by $s\mapsto t= s+c$, where~$s$ is this special parameter on the ellipse.

\begin{figure}[hbtp]\centering
\begin{tikzpicture}
\node at (0,0){\includegraphics{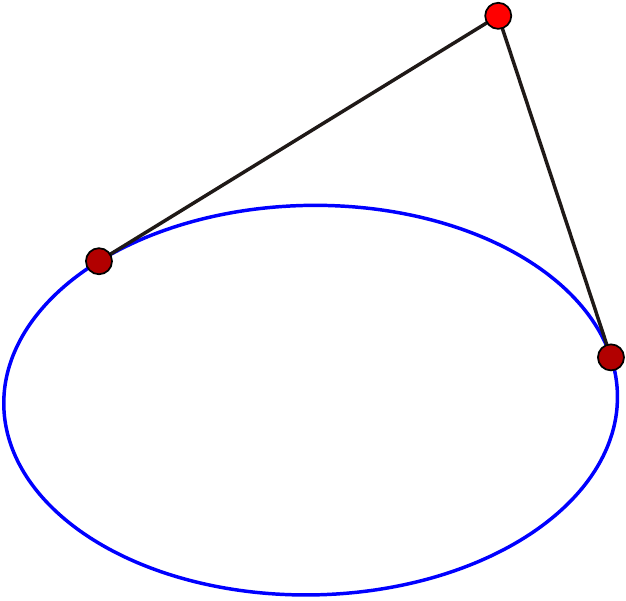}};
\node at (2.6,2.9){$C(s,t)$};
\node at (-2.7,0.75){$A=\gamma(s)$};
\node at (3.9,-0.6){$B=\gamma(t)$};
\end{tikzpicture}
\caption{The map $A\mapsto B$.}\label{coord}
\end{figure}

Let $\gamma(t)$ be an ellipse with this special parameterization. Introduce coordinates in its exterior by drawing the tangent lines to $\gamma$ and taking the parameters of the tangency points, see Fig.~\ref{coord}. The confocal ellipses are given by the equation $t-s= {\rm const}$, and the confocal hyperbolas by $t+s= {\rm const}$. That is, the billiard reflection in a confocal ellipse is given by $t\mapsto t+c$, and in a~confocal hyperbola by $t\mapsto c-t$; the constant $c$ depends on the choice of the ``mirror''.

The idea of the proof is illustrated by the following argument.

Suppose that one wants to prove that the diagonals of a rectangle have equal length by using billiards. First we note that the billiard reflection is well defined in a right angle: a close parallel trajectory exists the angle in the opposite direction after two reflections. Therefore one may consider a diagonal as a 4-periodic trajectory. It is included into a~1-parameter family of 4-periodic trajectories that interpolate between the two diagonals, see Fig.~\ref{rectangle} on the left.

It remains to note that periodic trajectories in a 1-parameter family have the same length: $n$-periodic trajectories are critical points of the perimeter function on inscribed $n$-gons, and a~function has constant value on its critical manifold.

\begin{figure}[hbtp]\centering
\includegraphics{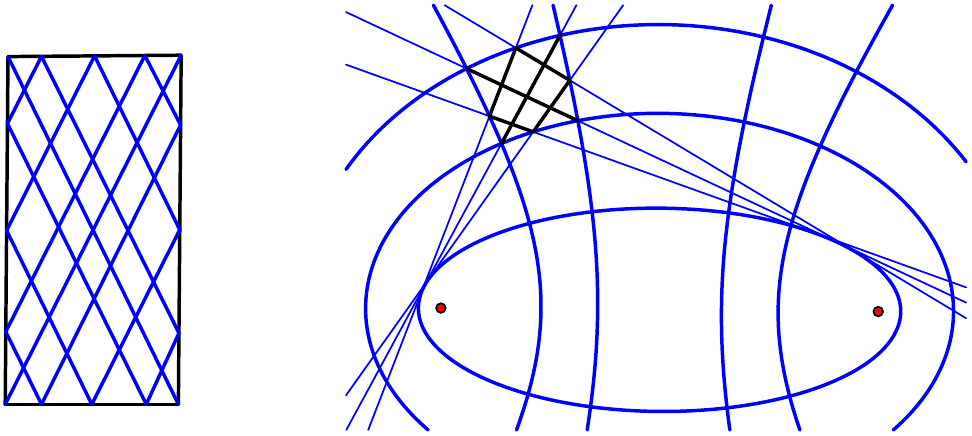}
\caption{The diagonal is included into a family of 4-periodic billiard trajectories.}\label{rectangle}
\end{figure}

The same argument works for Ivory's lemma. Confocal ellipses and hyperbolas are orthogonal, hence we may consider a diagonal as a 4-periodic billiard trajectory in a curvilinear rectangle. This trajectory is included into a 1-parameter family of 4-periodic trajectories. Indeed, in the special parameter on the caustic, the reflections are given by
\[
t \mapsto t + a,\qquad t\mapsto b-t, \qquad t \mapsto t + c,\qquad t\mapsto d-t.
\]
The composition of these maps is a shift. Since it has a fixed point (the initial diagonal), it is the identity. The rest of the argument is the same as for rectangles.

Ivory's lemma holds in higher dimensions as well and in other Riemannian metrics. For example, it holds on the surface of an ellipsoid in ${\mathbb R}^3$ with the lines of curvature playing the roles of confocal conics, see Fig.~\ref{ellipsoid}.

\begin{figure}[hbtp]\centering
\includegraphics[height=1.4in]{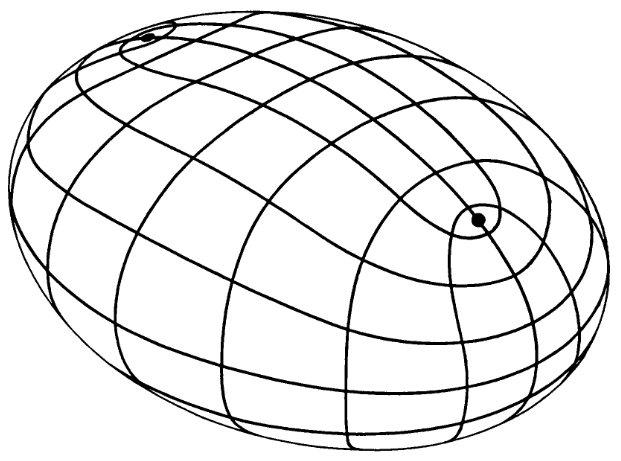}
\caption{Ivory's lemma on an ellipsoid in ${\mathbb R}^3$ with the lines of curvature as confocal conics.}\label{ellipsoid}
\end{figure}

In fact, the Ivory property of a net of curves on a Riemannian surface is equivalent to the property of the metric to be Liouville, that is, to have the form
\[
(f_1(x) - g_1(y)) \big(f_2(x) {\rm d}x^2 + g_2(y) {\rm d}y^2\big)
\]
in local coordinates $(x,y)$, and the curve to be the coordinate lines $x=$ const, $y=$ const (a~theorem of Blaschke).

\subsection{Problem 16: Chasles--Reye theorem}

The proof uses the special coordinate on the inner ellipse that we discussed above, see Fig.~\ref{Chasles1}.

\begin{figure}[hbtp]\centering
\begin{tikzpicture}
\node at (0,0){\includegraphics{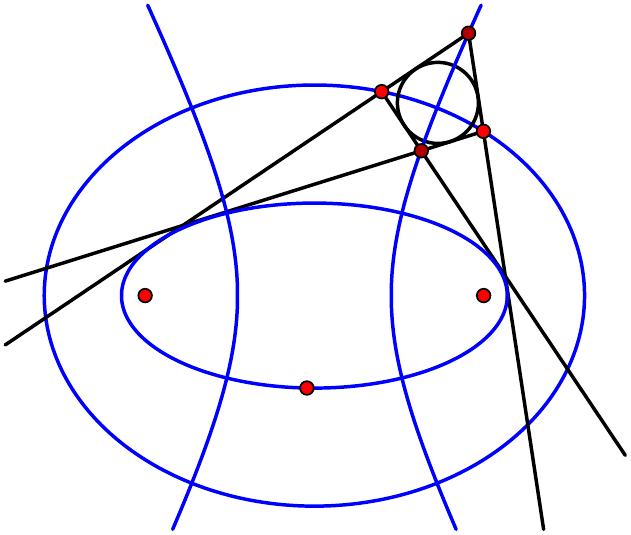}};
\node at (1.9,2.4){$D$};
\node at (0.6,2.1){$A$};
\node at (1.95,1.5){$B$};
\node at (0.8,0.9){$C$};
\node at (-0.05,-0.95){$O$};
\end{tikzpicture}
\caption{Proof of the Chasles--Reye theorem.}\label{Chasles1}
\end{figure}

Let $s_1$, $s_2$, $t_1$, $t_2$ be the coordinates of the tangency points of the lines $AD$, $BC$, $AC$, $BD$ with the ellipse. Then
$t_1-s_1=t_2-s_2$, hence $t_1+s_2=t_2+s_1$, and therefore points $C$ and $D$ lie on a confocal hyperbola.

Let $f$ and $g$ be the distances from points to point $O$ going around the ellipse. Then
\[
f(A)+g(A)=f(B)+g(B),\qquad f(C)-g(C)=f(D)-g(D),
\]
hence
\[
f(D)-f(A) - g(A)+g(C)+f(B)-f(C)-g(D)+g(B)=0,
\]
or
\[
|AD| + |BC| = |AC| + |BD|.
\]
This is a necessary and sufficient condition for a quadrilateral to be circumscribed.

\subsection{Problem 17: Equitangent problem}

The problem can be reformulated as follows: is it possible to move around a chord $AB$ inside an oval so that $\angle ABC > \angle BAC$ holds during the process (see Fig.~\ref{chord})?

\begin{figure}[hbtp]\centering
\begin{tikzpicture}
\node at (0,0){\includegraphics{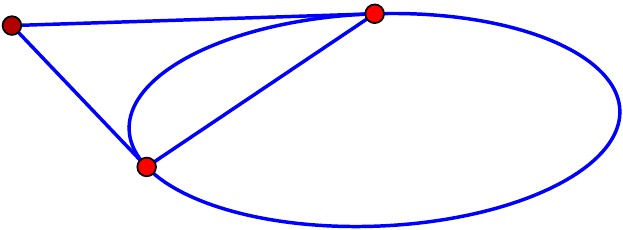}};
\node at (0.65,1.3){$A$};
\node at (-1.7,-0.8){$B$};
\node at (-3.3,0.95){$C$};
\end{tikzpicture}
\caption{Is it possible to move around a chord $AB$ inside an oval so that $\angle ABC > \angle BAC$?}\label{chord}
\end{figure}

What follows is a piecewise linear version of the construction: the oval is replaced by a convex polygon, the chord is described by its endpoints and the directions of the support lines at its endpoints. One may think about an axle of variable length with two wheels that can steer independently of each other.
The motion is also piecewise linear: at each step, one of the four elements (an endpoint or a support direction) varies linearly, see Fig.~\ref{hex}. A smoothing of the polygon yields a smooth strictly convex curve with the desired property.

\begin{figure}[hbtp]\centering
\begin{tikzpicture}
\node at (0,0){\includegraphics{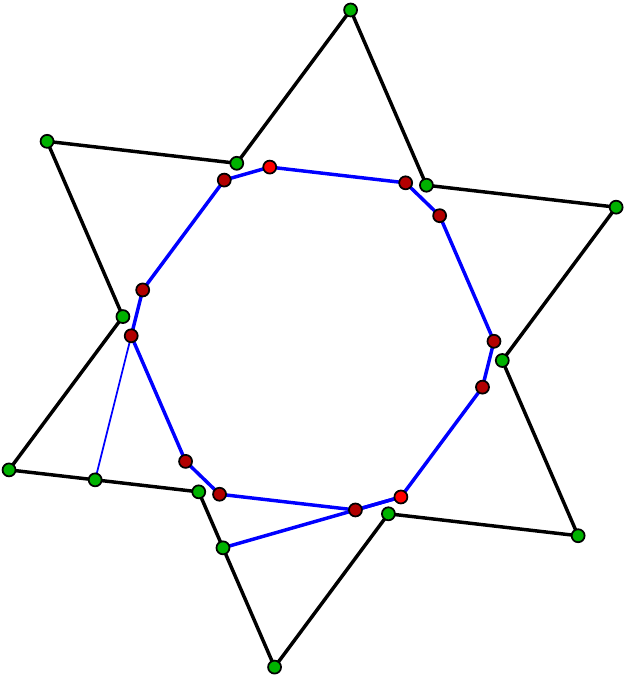}};
\node at (-0.9,1.3){$1$};
\node at (-1.5,0.5){$2$};
\node at (-1.6,0.05){$3$};
\node at (-1.2,-1.0){$4$};
\node at (-0.85,-1.3){$5$};
\node at (0.4,-1.5){$6$};
\node at (0.8,-1.4){$7$};
\node at (1.5,-0.45){$8$};
\end{tikzpicture}
\caption{The piecewise linear motion of the chord.}\label{hex}
\end{figure}

The dodecagon has a six-fold rotational symmetry, the star of David is the locus of the intersection points of the support lines at the endpoints of the moving chord (point $C$ in Fig.~\ref{chord}). Due to the six-fold symmetry, we describe only $1/6$ of the whole process.

The starting chord is $15$ with the support directions at its endpoints $12$ and $56$. The terminal chord is $37$ with the support directions $34$ and $78$. The terminal chord differs from the starting one by rotation through $\pi/3$.

Here is the whole process:
\begin{gather*}
 (15,12,56) \mapsto (25,12,56) \mapsto (25,23,56) \mapsto (35,23,56) \mapsto\\
 (35,34,56) \mapsto
(36,34,56) \mapsto (36,34,67) \mapsto (37, 34,67) \mapsto (37,34,78).
\end{gather*}

See \cite{Ta} for more details and comments. We mention a relevant result from~\cite{RMBC}, motivated by the flotation theory: given an oval~$\gamma$ and a fixed angle~$\alpha$, the locus of points from which the oval is seen under angle $\alpha$ contains at least four points from which the tangent segments to~$\gamma$ have equal lengths.

\subsection{Problem 18: Pentagons in the projective plane}

Both assertions are illustrated in Fig.~\ref{pentagon}.

\begin{figure}[hbtp]\centering
\begin{tikzpicture}
\node at (0,0){\includegraphics[scale=1.2]{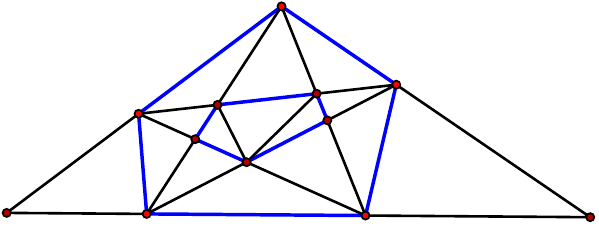}};
\node at (-0.2,1.55){$A$};
\node at (-2.0,0.2){$E$};
\node at (-1.1,0.3){$D'$};
\node at (0.4,0.5){$C'$};
\node at (1.3,0.6){$B$};
\node at (-3.8,-1.2){$E'$};
\node at (-1.8,-1.5){$D$};
\node at (-0.6,-0.9){$A'$};
\node at (0.8,-1.5){$C$};
\node at (3.8,-1.3){$B'$};
\end{tikzpicture}
\caption{The inner pentagon, obtained by the pentagram map, has the same cross-ratio at the vertex~$A'$ as the original large pentagon had at the vertex~$A$.}\label{pentagon}
\end{figure}

At every vertex of the pentagon one has four lines: two adjacent sides and two adjacent diagonals. These four concurrent lines have the cross-ratio, and these five cross-ratios projectively determine the pentagon. These five numbers are not independent -- the moduli space of projective pentagons is 2-dimensional -- and the relations between these numbers obey the rules of the theory of cluster algebras (these relations were calculated by Gauss in a different setting in his unpublished paper ``Pentagramma Mirificum'').

Consider the image of the pentagram map $T$, the inner pentagon. One has
\[
[AB, AC, AD,AE] = [B,C',D',E] = [A'B, A'C', A'D',A'E].
\]
It follows that there exists a projective transformation that takes point $A$ to $A'$, and other vertices of the initial pentagon to the opposite vertices of its image under the pentagram map.

Let $P=ABCDE$. Then $[AB, AC, AD,AE] = [B',C,D,E']$. The lines, dual to these four points, pass through the point, dual to the line $CD$, that is, through the vertex~$A^*$ of the dual pentagon. The lines $C^*$ and $D^*$ are the sides of the dual pentagon, and the lines $(B')^*$ and $(E')^*$ are its diagonals.

We conclude that $P$ is projectively equivalent to $P^*$, namely, there exists a projective transformation that takes~$A$ to $A^*$, $B$ to $B^*$, etc., see~\cite{FT} for more detail.

Back to the pentagram map: it is an involution on projective hexagons, but we do not know an elementary proof of this fact.

We should add that the Poncelet polygons, that is, polygons that are inscribed into a conic and circumscribed about a conic, are self-dual, and that the image of a Poncelet polygon $P$ under the pentagram map $T$ is projectively equivalent to~$P$ (every pentagon is a~Poncelet polygon). Both facts follow from the Poncelet grid theorem \cite{Sch}.

\begin{figure}[hbtp]\centering
\includegraphics{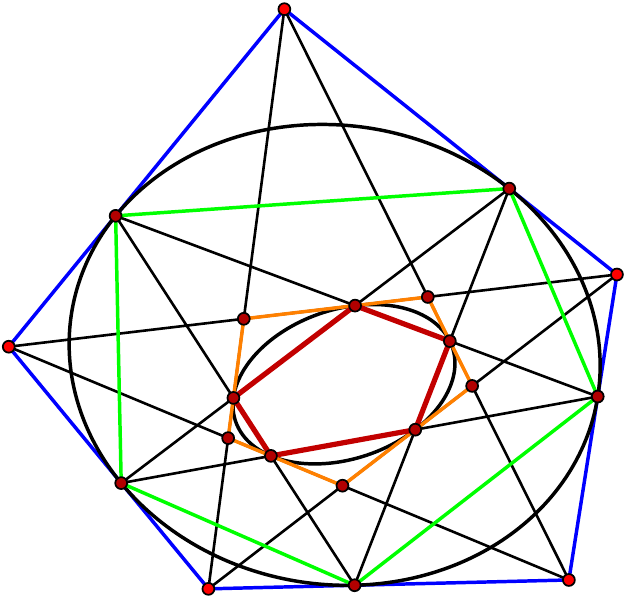}
\caption{The pentagram map $T$ commutes with the operation $I$ taking the vertices of a pentagon to the tangency points of its sides with a conic.}\label{Kasner}
\end{figure}

Moreover, the converse statement holds: being Poncelet is also necessary for a convex polygon~$P$ with an odd number of vertices to be projectively equivalent to~$T(P)$~\cite{Izosimov2019}.

Other interesting facts about behavior of inscribed polygons under the iterations of penta\-gram-like maps can be found in~\cite{schwartz2010elementary}.

And since we are talking about pentagons in the projective plane, let us conclude with a~theo\-rem by E.~Kasner illustrated in Fig.~\ref{Kasner}.

There are two operations on a pentagon: $T$, the pentagram map, and $I$, replacing the pentagon by the pentagon whose vertices are the tangency points of the conic that is tangent to its five sides. Kasner's theorem asserts that these two operations commute: $T\circ I = I \circ T$ (see~\cite{TaK} for details and a generalization to Poncelet polygons).

\subsection{Problem 19: Paper M\"obius bands}

Unlike the previous problems, this one is still open. The state of the art is as follows.

Let $\lambda$ be the number such that a smooth M\"obius band can be made out of $1\times l$ rectangle with $l>\lambda$ and cannot be made if $l < \lambda$. Then
\[
\frac{\pi}{2} \le \lambda \le \sqrt{3}.
\]
This theorem is contained in \cite{HW}; see \cite[Chapter~14]{FTb} for an accessible exposition. The reader who has ideas about an improvement of this result is encouraged to contact the authors.

This problem is closely related to curved origami, a ``hot'' topic of contemporary applied mathematical research.

\pdfbookmark[1]{References}{ref}
\LastPageEnding


\begin{thebibliography}{99}
\footnotesize\itemsep=0pt

\bibitem{AA}
Akopyan A., Avvakumov S., Any cyclic quadrilateral can be inscribed in any
 closed convex smooth curve, \href{https://doi.org/10.1017/fms.2018.7}{\textit{Forum Math. Sigma}} \textbf{6} (2018), e7,
 9~pages, \href{https://arxiv.org/abs/1712.10205}{arXiv:1712.10205}.

\bibitem{AB}
Akopyan A.V., Bobenko A.I., Incircular nets and confocal conics, \textit{Trans.
 Amer. Math. Soc.} \textbf{370} (2018), 2825--2854, \href{https://arxiv.org/abs/1602.04637}{arXiv:1602.04637}.

\bibitem{Ar-problems}
Arnold V.I., Arnold's problems, \href{https://doi.org/978-3-540-20614-9}{Springer-Verlag}, Berlin, 2004.

\bibitem{Ar}
Arnold V.I., Lobachevsky triangle altitudes theorem as the {J}acobi identity in
 the {L}ie algebra of quadratic forms on symplectic plane, \href{https://doi.org/10.1016/j.geomphys.2004.07.008}{\textit{J.~Geom.
 Phys.}} \textbf{53} (2005), 421--427.

\bibitem{Arnold}
Arnold V.I., Mathematical understanding of nature. Essays on amazing physical
 phenomena and their understanding by mathematicians, \href{https://doi.org/10.1090/mbk/085}{Amer. Math. Soc.},
 Providence, RI, 2014.

\bibitem{Ar-kids-problems}
Arnold V.I., Problems for children from 5 to 15, 2014, available at
 \url{https://imaginary.org/background-material/school-taskbook-from-5-to-15}.

\bibitem{AVG}
Arnold V.I., Gusein-Zade S.M., Varchenko A.N., Singularities of differentiable
 maps, {V}ol.~{I}. The classification of critical points, caustics and wave
 fronts, \textit{Monographs in Mathematics}, Vol.~82, \href{https://doi.org/10.1007/978-1-4612-5154-5}{Birkh\"{a}user Boston},
 Inc., Boston, MA, 1985.

\bibitem{BLPR}
Baritompa B., L\"{o}wen R., Polster B., Ross M., Mathematical table-turning
 revisited, \href{https://doi.org/10.1007/BF02986206}{\textit{Math. Intelligencer}} \textbf{29} (2007), 49--58.

\bibitem{DD}
Demaine E.D., Demaine M.L., Minsky Y.N., Mitchell J.S.B., Rivest R.L.,
 P\v{a}tra\c{s}cu M., Picture-hanging puzzles, \href{https://doi.org/10.1007/s00224-013-9501-0}{\textit{Theory Comput. Syst.}}
 \textbf{54} (2014), 531--550, \href{https://arxiv.org/abs/1203.3602}{arXiv:1203.3602}.

\bibitem{Fe}
Fenn R., The table theorem, \href{https://doi.org/10.1112/blms/2.1.73}{\textit{Bull. London Math. Soc.}} \textbf{2} (1970),
 73--76.

\bibitem{FTb}
Fuchs D., Tabachnikov S., Mathematical omnibus. Thirty lectures on classic mathematics, \href{https://doi.org/10.1090/mbk/046}{Amer. Math. Soc.}, Providence,
 RI, 2007.

\bibitem{FT}
Fuchs D., Tabachnikov S., Self-dual polygons and self-dual curves,
 \href{https://doi.org/10.1007/s11853-008-0020-5}{\textit{Funct. Anal. Other Math.}} \textbf{2} (2009), 203--220,
 \href{https://arxiv.org/abs/0707.1048}{arXiv:0707.1048}.

\bibitem{GIT}
Glutsyuk A., Izmestiev I., Tabachnikov S., Four equivalent properties of
 integrable billiards, \textit{Israel~J. Math.}, to appear, \href{https://arxiv.org/abs/1909.09028}{arXiv:1909.09028}.

\bibitem{Gr}
Gray A., Tubes, Addison-Wesley Publishing Company, Advanced Book Program,
 Redwood City, CA, 1990.

\bibitem{HW}
Halpern B., Weaver C., Inverting a cylinder through isometric immersions and
 isometric embeddings, \href{https://doi.org/10.2307/1997711}{\textit{Trans. Amer. Math. Soc.}} \textbf{230} (1977),
 41--70.

\bibitem{IT}
Izmestiev I., Tabachnikov S., Ivory's theorem revisited, \href{https://doi.org/10.1093/integr/xyx006}{\textit{J.~Integrable
 Syst.}} \textbf{2} (2017), xyx006, 36~pages, \href{https://arxiv.org/abs/1610.01384}{arXiv:1610.01384}.

\bibitem{Izosimov2019}
Izosimov A., The pentagram map, {P}oncelet polygons, and commuting difference
 operators, \href{https://arxiv.org/abs/1906.10749}{arXiv:1906.10749}.

\bibitem{Khovanova}
Khovanova T., Martin {G}ardner's mistake, \href{https://doi.org/10.4169/college.math.j.43.1.020}{\textit{College Math.~J.}} \textbf{43}
 (2012), 20--24, \href{https://arxiv.org/abs/1102.0173}{arXiv:1102.0173}, see also \url{http://blog.tanyakhovanova.com}.

\bibitem{KK}
Kronheimer E.H., Kronheimer P.B., The tripos problem, \href{https://doi.org/10.1112/jlms/s2-24.1.182}{\textit{J.~London Math.
 Soc.}} \textbf{24} (1981), 182--192.

\bibitem{Li}
Livesay G.R., On a theorem of {F}.{J}.~{D}yson, \href{https://doi.org/10.2307/1969689}{\textit{Ann. of Math.}}
 \textbf{59} (1954), 227--229.

\bibitem{Ma}
Matschke B., A survey on the square peg problem, \href{https://doi.org/10.1090/noti1100}{\textit{Notices Amer. Math.
 Soc.}} \textbf{61} (2014), 346--352.

\bibitem{Me}
Meyerson M.D., Remarks on {F}enn's ``the table theorem'' and {Z}aks' ``the
 chair theorem'', \href{https://doi.org/10.2140/pjm.1984.110.167}{\textit{Pacific~J. Math.}} \textbf{110} (1984), 167--169.

\bibitem{Michor}
Michor P.W., Topics in differential geometry, \textit{Graduate Studies in
 Mathematics}, Vol.~93, \href{https://doi.org/10.1090/gsm/093}{Amer. Math. Soc.}, Providence, RI, 2008.

\bibitem{RMBC}
Rapha\"el E., di~Meglio J.-M., Berger M., Calabi E., Convex particles at
 interfaces, \href{https://doi.org/10.1051/jp1:1992112}{\textit{J.~Phys.~I France}} \textbf{2} (1992), 571--579.

\bibitem{Sa}
Santal\'{o} L.A., Integral geometry and geometric probability,
 \textit{Encyclopedia of Mathematics and its Applications}, Vol.~1,
 Addison-Wesley Publishing Co., Reading, Mass.~-- London~-- Amsterdam, 1976.

\bibitem{schwartz1992pentagram}
Schwartz R., The pentagram map, \textit{Experiment. Math.} \textbf{1} (1992),
 71--81.

\bibitem{Sch}
Schwartz R., The {P}oncelet grid, \href{https://doi.org/10.1515/ADVGEOM.2007.010}{\textit{Adv. Geom.}} \textbf{7} (2007),
 157--175.

\bibitem{schwartz2010elementary}
Schwartz R., Tabachnikov S., Elementary surprises in projective geometry,
 \href{https://doi.org/10.1007/s00283-010-9137-8}{\textit{Math. Intelligencer}} \textbf{32} (2010), 31--34, \href{https://arxiv.org/abs/0910.1952}{arXiv:0910.1952}.

\bibitem{TaB}
Tabachnikov S., Geometry and billiards, \textit{Student Mathematical Library},
 Vol.~30, \href{https://doi.org/10.1090/stml/030}{Amer. Math. Soc.}, Providence, RI, 2005.

\bibitem{Ta}
Tabachnikov S., The (un)equal tangents problem, \href{https://doi.org/10.4169/amer.math.monthly.119.05.398}{\textit{Amer. Math. Monthly}}
 \textbf{119} (2012), 398--405, \href{https://arxiv.org/abs/1102.5577}{arXiv:1102.5577}.

\bibitem{TaK}
Tabachnikov S., Kasner meets {P}oncelet, \href{https://doi.org/10.1007/s00283-019-09897-5}{\textit{Math. Intelligencer}}
 \textbf{41} (2019), 56--59, \href{https://arxiv.org/abs/1707.09267}{arXiv:1707.09267}.

\bibitem{DT}
Tabachnikov S., Dogru F., Dual billiards, \href{https://doi.org/10.1007/BF02985854}{\textit{Math. Intelligencer}}
 \textbf{27} (2005), 18--25.

\end{thebibliography}
\end{document}